\g@addto@macro\normalsize{%
  \setlength\abovedisplayskip{8pt plus 3pt minus 3pt}
  \setlength\belowdisplayskip{8pt plus 3pt minus 3pt}
  \setlength\abovedisplayshortskip{6pt plus 3pt minus 2pt}
  \setlength\belowdisplayshortskip{6pt plus 3pt minus 2pt}
}
\date{\today}
\numberwithin{equation}{section}
\def\({\bigl(}
\def\){\bigr)}
\newtheorem{thm}{Theorem}[section]
\newtheorem{lemma}[thm]{Lemma}
\newtheorem{conj}[thm]{Conjecture}
\theoremstyle{definition}
\def\abs#1{\lvert#1\rvert} \let\card=\abs
\def\norm#1{\lVert#1\rVert}
\def\dfrac#1#2{\lower0.15ex\hbox{\large$\textstyle\frac{#1}{#2}$}}
\def\({\bigl(}
\def\){\bigr)}
\def\st{\,:\,}
\def\fix{\triangleright}
\def\One{\mathds{1}}
\def\newDelta{\overset{\rule{0.5em}{0.4pt}}{\Delta}}
\let\eps=\varepsilon
\def\lambdavec{\boldsymbol{\lambda}}
\def\RG{\operatorname{RG}}
\def\D{\mathcal{D}}
\def\X{\boldsymbol{X}}
\def\Y{\boldsymbol{Y}}
\def\Z{\boldsymbol{Z}}
\def\x{\boldsymbol{x}}
\def\y{\boldsymbol{y}}
\def\calA{\mathcal{A}}
\def\calE{\mathcal{E}}
\def\calG{\mathcal{G}}
\def\thetavec{\boldsymbol{\theta}}
\def\xvec{\boldsymbol{x}}
\def\yvec{\boldsymbol{y}}
\def\zvec{\boldsymbol{z}}
\def\Om{\boldsymbol{\varOmega}}
\def\E{\operatorname{\mathbb{E}}}
\def\Var{\operatorname{Var}}
\def\Reals{{\mathbb{R}}}
\def\Complexes{{\mathbb{C}}}
\def\edge{\mathcal{E}}
\def\nicebreak{\vskip 0pt plus 50pt\penalty-300\vskip 0pt plus -50pt }
\begin{document}

\title{A tail bound for cumulant series for complex functions of independent random variables}
\author{Mikhail Isaev\\
School of Mathematics and Statistics\\
UNSW Sydney,
Sydney, Australia}
\date{}
\maketitle

\begin{abstract}
  We obtain explicit bounds on the truncation error of the cumulant series of a bounded complex function of a random vector with independent components.
  The error is given in terms of multidimensional differences corresponding to mixed partial derivatives.
  This extends the theory of the author with Brendan McKay and Rui-Ray Zhang (J. Combin.\ Th., Ser.\ B, 2025) from real functions to complex functions.
  We demonstrate some initial applications including a Berry--Esseen bound,
  an Edgeworth expansion for triangles in random graphs, and
  enumeration of regular graphs.
\end{abstract}

\section{Introduction}

The \textit{cumulants} $\kappa_j(W)$ of a bounded random variable $W$ can be defined
by the formal generating identity
\begin{equation}\label{cum:def}
   \E e^{tW} = 
    \sum_{j\ge 0} \frac{t^j \E W^j}{j!} = 
    \exp\biggl(\sum_{r\ge 1} \frac{\kappa_r(W) z^j}{r!}\biggr);
\end{equation}
see Speed~\cite{Speed} for the basic properties of cumulants.
Note that the sum over $j$ converges for all $t$, but the sum over $r$ might not.
The first few cumulants are given by the formulas 
\begin{align*}
    \kappa_1(W) &= \E W, \qquad \kappa_2(W) = \E(W-\E W)^2,  \qquad \kappa_3(W) = \E(W-\E W)^3,\\
    \kappa_4(W) &= \E(W-\E W)^4 - 3 \left(\E(W-\E W)^2\right)^2.
\end{align*}

In this paper we are concerned with the accuracy of the approximation of $\E e^{W}$
by  the sum over $r$ in \eqref{realThm}  truncated to
\[
    \exp\biggl(\sum_{r=1}^m \frac{\kappa_r(W) z^j}{r!}\biggr),
\]
when  $W$ is a complex function of several independent random variables.
The case of $m=2$ for real functions was treated by Catoni~\cite{Catoni}.   
This was extended to complex functions in the more general setting of martingales
by McKay and the author in \cite{mother}. For $m> 2$ and real functions, the tail bound for the cumulant series was obtained by McKay, Zhang and the author in  \cite{eulerian}, which we present below as Theorem \ref{realThm}.

Let $\Om=\varOmega_1\times \cdots\times \varOmega_n$ be an $n$-dimensional
domain and let $[n]$ denote the set $\{1, \ldots, n\}$. 
Suppose $\X=(X_1,\ldots, X_n)$ is a random vector with independent components, taking values in~$\Om$. For vectors $\xvec=(x_1,\ldots,x_n),\yvec=(y_1,\ldots,y_n)\in\Om$ and $V\subseteq[n]$, define $\xvec\fix_V\yvec = (u_1,\ldots,u_n)$
where, $u_j=x_j$ if $j\notin V$ and $u_j=y_j$ if $j\in V$.
For a bounded measurable function $f:\Om\to\Complexes$ and $V\subseteq[n]$, define
\begin{equation}\label{def:Delta}
   \Delta_V(f) = \sup_{\xvec,\yvec\in\Om}\;
   \biggl| \sum_{W\subseteq V} (-1)^{\card{W}} f(\xvec\fix_W\yvec)
   \biggr|.
\end{equation}

\begin{thm}[Isaev, McKay, Zhang \cite{eulerian}]\label{realThm}
	Let $m > 0$ be an integer and $\alpha \geq 0$. 
	Suppose $f:\Om\to\Reals$ is bounded and measurable,
    such that for all $v\in[m]$,
    \[
       \max_{j \in [n]}\,\sum_{V\in \binom{[n]}{v} \st j \in V } \!\Delta_V(f)
       \le \alpha.
    \]
	Then, we have
   \[
  	 \E { e^{f(\X)} }  =  
   (1+\delta)^n  \exp\biggl(\sum_{r=1}^m \frac{\kappa_r (f(\X))}{r!} \biggr),
  \]
   where $\delta> -1$ satisfies $\abs\delta \leq  e^{(100 \alpha)^{m+1}}-1$. 
   Furthermore,
   for any $r\in [m]$,
   \[
            \abs{\kappa_r (f(\X))}
            \leq  n \frac{(r-1)!}{50r}(80
             \alpha)^{r}.  
   \]
\end{thm}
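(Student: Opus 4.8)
I would prove the two assertions in the order (i) the bound on $\abs{\kappa_r(f(\X))}$, then (ii) the approximation $\E e^{f(\X)}=(1+\delta)^n\exp(\sum_{r=1}^m\kappa_r(f(\X))/r!)$, since the first feeds the second. The common engine is a \emph{connected expansion}: after introducing an independent copy $\Y$ of $\X$, the multidimensional differences in \eqref{def:Delta} let one write cumulants, and more generally joint cumulants, of functions of independent coordinates as signed sums over connected families of coordinate subsets, with each term controlled by a product of the corresponding $\Delta_V$'s.

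\textbf{Step 1: the cumulant bound.} Fix $r\le m$ and write $W=f(\X)$. Expanding $\kappa_r(W)$ by the moment--cumulant formula and repeatedly replacing coordinates of $\X$ by those of independent copies, one obtains a finite signed sum of expectations of products of ``telescoped differences'' $\sum_{V'\subseteq V}(-1)^{\abs{V'}}f(\,\cdot\fix_{V'}\cdot\,)$; by mutual independence of the coordinates, every summand whose index sets split into two groups touching disjoint coordinate sets cancels, so only terms indexed by \emph{connected} families $(V_1,\dots,V_r)$ of nonempty subsets of $[n]$ with $\sum_i\abs{V_i}$ bounded by a constant times $r$ (so in particular every $\abs{V_i}\le r\le m$) survive, each bounded in modulus by $\prod_i\Delta_{V_i}(f)$ up to a benign coefficient. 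I would then bound this sum by rooting each connected family at one of the $n$ coordinates -- this is where the leading factor $n$ enters -- and summing over the remaining parts by a branching estimate: each new part must meet the coordinates already selected, and by the hypothesis $\max_{j}\sum_{V\ni j,\,\abs{V}=v}\Delta_V(f)\le\alpha$ its contribution is at most (current number of selected coordinates)$\,\times\alpha$. The number of connected families of a given size is factorial in $r$ but is absorbed by the coefficients together with the $1/r!$ of the cumulant--moment relation; keeping track of all constants gives $\abs{\kappa_r(f(\X))}\le n\,\tfrac{(r-1)!}{50r}\,(80\alpha)^r$.

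\textbf{Step 2: the approximation.} For $0\le k\le n$ set $g^{(k)}(X_1,\dots,X_k):=\log\E\bigl[e^{f(\X)}\mid X_1,\dots,X_k\bigr]=\log\E_{X_{k+1},\dots,X_n}e^{f(\X)}$, a function of the first $k$ independent coordinates, with $g^{(n)}=f$ and $g^{(0)}=\log\E e^{f(\X)}$ constant. Writing $\Phi_m(h):=\sum_{r=1}^m\kappa_r(h)/r!$ for the $m$-truncated cumulant series (so $\Phi_m(g^{(0)})=\log\E e^{f(\X)}$), the telescoping identity
\[
   \sum_{r=1}^m\frac{\kappa_r(f(\X))}{r!}-\log\E e^{f(\X)}
   \;=\;\Phi_m\bigl(g^{(n)}\bigr)-\Phi_m\bigl(g^{(0)}\bigr)
   \;=\;\sum_{k=1}^n\bigl(\Phi_m(g^{(k)})-\Phi_m(g^{(k-1)})\bigr)
   \;=:\;\sum_{k=1}^n\eta_k
\]
reduces everything to bounding a single $\eta_k$. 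Put $w_k:=g^{(k)}-g^{(k-1)}$; by construction $\E\bigl[e^{w_k}\mid X_1,\dots,X_{k-1}\bigr]=1$, so $w_k$ is ``centred in the $X_k$-direction'', and $\Norm{w_k}_\infty\le\Delta_{\{k\}}(f)\le\alpha$ because $h\mapsto\log\E e^{h}$ is $1$-Lipschitz in the sup-norm. Multilinearity and symmetry of joint cumulants applied to $g^{(k)}=g^{(k-1)}+w_k$ give the \emph{finite} expansion
\[
   \eta_k=\sum_{\substack{p\ge1,\ q\ge0\\ p+q\le m}}\frac{1}{p!\,q!}\,
   \kappa\bigl(\underbrace{w_k,\dots,w_k}_{p},\underbrace{g^{(k-1)},\dots,g^{(k-1)}}_{q}\bigr),
\]
a sum of joint cumulants of order $\le m$, each containing at least one $w_k$-slot. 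I would bound each term by the connected expansion of Step 1: the connected families must now be \emph{anchored} at coordinate $k$ through a $w_k$-slot, which deletes the factor $n$; their parts have size $\le m$, so the relevant differences $\Delta_V(w_k)$ (which equal $\Delta_V(g^{(k)})$ when $k\in V$, and vanish for $g^{(k-1)}$ when $k\in V$ since $g^{(k-1)}$ is independent of $X_k$) and $\Delta_V(g^{(k-1)})$ are all of order $\le m$ and hence controlled, via a stability lemma bounding the $\le m$-wise multidimensional differences of the partial log-partition functions $g^{(k)},g^{(k-1)}$ in terms of those of $f$, by the hypothesis up to an absolute constant; and the algebraic cancellations behind $\log\E e^{g^{(k)}}=\log\E e^{g^{(k-1)}}$ ensure that all contributions of order $\le m$ in $\alpha$ cancel, leaving $\abs{\eta_k}\le(100\alpha)^{m+1}$. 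Consequently $\E e^{f(\X)}=\exp(\sum_{r=1}^m\kappa_r(f(\X))/r!)\prod_{k=1}^n e^{-\eta_k}$, and since each factor $e^{-\eta_k}$ lies in $[e^{-(100\alpha)^{m+1}},e^{(100\alpha)^{m+1}}]$ the product equals $(1+\delta)^n$ with $1+\delta>0$ and $\abs\delta\le e^{(100\alpha)^{m+1}}-1$.

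\textbf{Main obstacle.} The delicate points are all in Step 2: (a) the stability lemma, i.e.\ that applying $h\mapsto\log\E_{X_k}e^{h}$ does not blow up the low-order differences; and (b) organizing the finite expansion of $\eta_k$ so that the cancellations ``beyond order $m$'' are manifest and the surviving terms, summed with their combinatorial weights and the $1/(p!\,q!)$ factors, come out bounded by $(100\alpha)^{m+1}$ with no factorial growth in $m$ and no stray factor $n$. By comparison the constant bookkeeping in Step 1 (the $80$ and $50$) is routine but tedious.
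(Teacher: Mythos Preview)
The paper does not re-prove this theorem; it is quoted from \cite{eulerian}, but Section~\ref{S:decompositions} establishes the complex analog by exactly the \cite{eulerian} scheme, so that is the reference point. Your Step~1 is in that spirit: the cumulant bound there does come from an expansion anchored at one coordinate, cf.\ Lemma~\ref{L:cum-crucial}(a) and the final telescoping in the proof of Theorem~\ref{T:decompositions}.

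Step~2, however, is a different telescoping and carries a genuine gap. The paper and \cite{eulerian} never pass to the log-partition functions $g^{(k)}=\log\E[e^{f}\mid X_1,\dots,X_k]$; they telescope over the \emph{conditional} cumulants $\kappa_r^{\geq j}(f)$, which are random (functions of $X_1,\dots,X_{j-1}$), and control $\lambda_j=\E^{\geq j}\bigl[\exp\bigl(f-\sum_{r\le m}\kappa_r^{\geq j}(f)/r!\bigr)\bigr]-1$ by downward induction. The decisive one-step estimate (Lemma~\ref{L:cum-crucial}(b)) is on $\E^{\geq j}\bigl[\exp(\text{difference})\bigr]-1$, \emph{not} on the difference $\sum_{r\le m}(\kappa_r^{\geq j+1}-\kappa_r^{\geq j})/r!$ itself: the latter is a random quantity of size $O(\alpha)$ (its $r=1$ term is a martingale increment in $X_j$), and it is the further expectation over $X_j$ that manufactures the cancellation to order~$m$. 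In your scheme each $\eta_k$ is already a number, so no averaging remains, and the cancellation must be purely algebraic. Your justification via ``$\log\E e^{g^{(k)}}=\log\E e^{g^{(k-1)}}$'' does not close this: that identity equates the full cumulant series of $g^{(k)}$ and $g^{(k-1)}$ only if both converge at $t=1$, which is not a consequence of the hypotheses, and even then the tail you would invoke involves $\kappa_r$ for $r>m$, whose connected expansion requires control of $\Delta_V(g^{(k-1)})$ for $|V|>m$ that you do not have. The ``stability lemma'' compounds the problem: for $|V|\ge 2$ the map $h\mapsto\log\E_{X_k}e^{h}$ does not preserve $\Delta_V$ up to an absolute constant --- the nonlinearity of $\log$ contributes products of lower-order differences --- and tracking those extra products through the joint-cumulant expansion with the sharp constants $80$, $50$, $100$ is precisely the work the conditional-cumulant route sidesteps by never taking a $\log$.
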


Theorem~\ref{realThm} is not true in the case of complex-valued $f$
(see \cite[Remark 2.5]{mother} for a counterexample) but in this paper we will fill that gap under additional assumptions on $\Delta_V$ for larger sets. Intuitively, this is required to control the cancellations due to possible oscillations that might depend on many variables. 

For $V\subseteq [n]$, define
\begin{equation}\label{def:newDelta}
   \newDelta_V(f,\X) = \sup_{\yvec\in\Om}\;
   \biggl| \E\biggl(\sum_{W\subseteq V} (-1)^{\card{W}} f(\X\fix_W\yvec)
   \biggr)\biggr|.
\end{equation}
Clearly $\newDelta_V(f,\X)\le\Delta_V(f)$.
For example, for $n=3$,
\begin{align*}
    \newDelta_{\{1\}} &= \sup_{x_1}\,
      \bigl| \E f(X_1,X_2,X_3) - \E f(x_1,X_2,X_2) \bigr| \\
     \newDelta_{\{1,2\}} &= \sup_{x_1,x_2}
      \bigl| \E f(X_1,X_2,X_3) - \E f(x_1,X_2,X_3) 
         - \E f(X_1,x_2,X_3) + \E f(x_1,x_2,X_3)\bigr|.
\end{align*}
Further define, for $k\in[n]$, the quantity
\begin{equation}\label{Sdef}
   S_k(f,\X) = \max_{j \in [n]}\,\sum_{V\in \binom{[n]}{k} \st j \in V } \!\newDelta_V(f,\X).
\end{equation}

\begin{thm}\label{complexThm}
	Let $m > 0$ be an integer and $\alpha \ge 0$. 
	Suppose $f:\Om\to\Complexes$ is bounded and measurable such that
    \[
       \sum_{k=t}^n e^{3\alpha k/2} 2^t\binom{k-1}{t-1}S_k(f,\X) \le \alpha \leq \dfrac{1}{100} \qquad \text{for  all $t \in [m]$.}
    \]
	Then, we have
   \[
  	 \E { e^{f(\X)} }  =  
   (1+\delta)^n  \exp\biggl(\sum_{r=1}^m \frac{\kappa_r (f(\X))}{r!} \biggr),
  \]
   where $\delta$ satisfies $\abs\delta \leq  e^{(100 \alpha)^{m+1}}-1$. 
   Furthermore, for $2\le r\le m$,
   \[
            \abs{\kappa_r (f(\X))}
            \leq  n \frac{(r-1)!}{50r}(80
             \alpha)^{r}.  
   \]
\end{thm}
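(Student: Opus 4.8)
The plan is to adapt the combinatorial expansion of $\log\E e^{f(\X)}$ developed for real $f$ in \cite{eulerian}, replacing the pointwise mixed differences $\Delta_V(f)$ throughout by the expectation-first differences $\newDelta_V(f,\X)$ and carrying along the extra weights that this substitution forces. Since adding a constant to $f$ changes neither side of the asserted identity, I first peel off $\kappa_1(f(\X))=\E f(\X)$ and work with the centred function $\tilde f=f-\E f(\X)$, which has $\kappa_1(\tilde f(\X))=0$. Next I expand $\log\E e^{\tilde f(\X)}$ as an absolutely convergent sum $\sum_{\calA}\Phi(\calA)$ over connected families $\calA=\{V_1,\dots,V_s\}$ of nonempty subsets of $[n]$ --- ``connected'' meaning that the graph on $\calA$ joining intersecting sets is connected --- exactly as in the cluster-expansion-style argument of \cite{eulerian}. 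Absolute convergence, which will follow from the hypothesis, simultaneously certifies that $\E e^{\tilde f(\X)}\ne 0$ and identifies the sum with the branch of the logarithm that vanishes when $f$ is constant. Grouping the families with $2\le s\le m$ sets recovers the finite partial sum $\sum_{r=2}^m\kappa_r(f(\X))/r!$ (the one-set families contribute $\kappa_1(\tilde f(\X))=0$), and the remaining families --- those with $s\ge m+1$ sets --- constitute the error $\calE$; putting $n\log(1+\delta)=\calE$ then delivers the product form, with $\delta$ complex in general and $\abs\delta\le e^{\abs{\calE}/n}-1$.

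\smallskip
\noindent\textbf{The per-family estimate.}
The heart of the argument is a bound of the form $\abs{\Phi(\calA)}\le(\text{combinatorial factor})\cdot\prod_{i=1}^s\newDelta_{V_i}(f,\X)$. This is exactly where complex $f$ departs from \cite{eulerian}: in the real case $\Phi(\calA)$ is estimated using the pointwise differences $\Delta_{V_i}(f)$, which commute freely with every expectation, whereas for complex $f$ this is false (as the counterexample in \cite{mother} shows) and one may use only $\newDelta_{V_i}(f,\X)$, in which the expectation sits \emph{outside} the modulus and the mixed difference. Accordingly I reorganise $\Phi(\calA)$ so that, for each set $V_i$ of size $k$, I (i) split off the factor $\abs{e^{\tilde f}}$ restricted to the coordinates of $V_i$ and bound it crudely by $e^{3\alpha k/2}$ --- the variation of $f$ over those $k$ coordinates decomposing into single-coordinate contributions, each at most $\tfrac{3}{2}\alpha$ under the hypothesis --- and (ii) rearrange what remains so that its expectation acts on the outside of a mixed difference of $f$ over $V_i$, i.e.\ becomes a quantity at most $\newDelta_{V_i}(f,\X)$. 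The bookkeeping of this reorganisation manufactures precisely the weights appearing in the hypothesis: the factor $e^{3\alpha k/2}$ per set of size $k$ from step (i); a factor $2^t$ from the sub-terms produced when a size-$t$ piece is recast in $\newDelta$-form; and a factor $\binom{k-1}{t-1}$ when the contribution of a size-$k$ set is distributed over its size-$t$ subsets through a fixed coordinate. Summing, for a fixed coordinate $j$, over all sets of all sizes then yields
\[
   \sum_{k=t}^{n} e^{3\alpha k/2}\,2^t\binom{k-1}{t-1}\,S_k(f,\X)\ \le\ \alpha ,
\]
which is precisely the hypothesis for each $t\in[m]$.

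\smallskip
\noindent\textbf{Assembling the estimates.}
With the per-family estimate in hand, the sum over connected families is controlled as in \cite{eulerian}: one selects a root coordinate ($n$ choices), grows a connected family of $s$ sets through it, charges at most a constant multiple of $\alpha$ to each set by the displayed inequality, and multiplies by the combinatorial count of such rooted families. Collecting the families with exactly $r$ sets, $2\le r\le m$, reproduces the ``furthermore'' bound $\abs{\kappa_r(f(\X))}\le n\frac{(r-1)!}{50r}(80\alpha)^r$, while collecting all families with $s\ge m+1$ sets --- each of which carries at least $m+1$ difference factors --- gives $\abs{\calE}\le n(100\alpha)^{m+1}$ once $\alpha\le\tfrac1{100}$, the looser constant $100$ versus $80$ absorbing the summation over $s$; hence $\abs{\log(1+\delta)}=\abs{\calE}/n\le(100\alpha)^{m+1}$ and $\abs\delta\le e^{(100\alpha)^{m+1}}-1$, as claimed. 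I expect the main obstacle to be the reorganisation in the second paragraph: trading $\Delta_V$ for $\newDelta_V$ requires moving expectations across mixed differences while simultaneously peeling off the weight $e^{\tilde f}$, and this must be done uniformly over families that may contain sets of \emph{every} size up to $n$ --- in contrast to the real case, where the hypothesis involves only sizes up to $m$ --- which is exactly why the hypothesis here takes its more elaborate weighted form, and where essentially all of the technical work lies.
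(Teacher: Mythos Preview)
Your plan has a genuine gap at exactly the point you yourself flag as the obstacle, and the heuristics you offer there do not survive scrutiny. The whole force of $\Delta_V(f)$ in \cite{eulerian} is that it bounds the mixed difference \emph{pointwise}, so that a product of differences is bounded by a product of $\Delta$'s; this is what makes the cumulant recursion close. By contrast $\newDelta_V(f,\X)$ is a bound only \emph{after} integrating out all of $\X$, so you cannot control a product $\prod_i(\text{mixed difference over }V_i)$ by $\prod_i\newDelta_{V_i}$ --- the expectation does not factor across the $V_i$ once they overlap. Your ``reorganisation'' is supposed to repair this, but you never say what it is; and the justification you give for the weight $e^{3\alpha k/2}$ (a bound on $|e^{\tilde f}|$ over $k$ coordinates, each contributing $\tfrac32\alpha$) is simply wrong: nothing in the hypothesis yields a per-coordinate oscillation of $\tfrac32\alpha$, and in any case no such analytic estimate on $|e^{\tilde f}|$ is what produces that factor in the paper.

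The paper's route is structurally different from a cluster expansion and is worth knowing. It lifts $f$ to its Hoeffding decomposition $\pi_f$ inside a commutative Banach algebra $\mathcal D$ of ``decompositions'' $\pi:2^{[n]}\times\Om\to\Complexes$, equipped with the weighted norm $\|\pi\|_\gamma=\sum_V\|\pi(V,\cdot)\|_\infty e^{\gamma|V|}$. The crucial point (Lemma~\ref{L:partial-Delta}) is that the Hoeffding components satisfy the \emph{pointwise} bound $|\partial^V_{\y}\pi_f(W,\x)|\le\newDelta_{V\cup W}(f)$; because this is pointwise on the pieces, products and exponentials behave submultiplicatively in $\|\cdot\|_\gamma$, and the entire machinery of \cite{eulerian} transfers verbatim with $\Gamma_{V,\gamma}(\pi_f):=\sup_{\y}\|\partial^V_{\y}\pi_f\|_\gamma$ playing the role of $\Delta_V$. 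The argument is then a martingale-style downward induction (integrate out one coordinate at a time, controlling conditional cumulants), not a sum over connected families. The weights you tried to explain analytically arise purely combinatorially at the very end, when one translates the hypothesis on $\Gamma_{V,\gamma}$ back into the language of $\newDelta$: the $e^{3\alpha k/2}=e^{\gamma k}$ is the norm weight ($\gamma=\tfrac32\alpha$ is chosen so that the $e^{-\gamma}$ gain from taking an expectation beats a loss of size $\approx 1.1\alpha\sum_r(80\alpha)^{r-1}/r^2$), while $2^t$ and $\binom{k-1}{t-1}$ come from counting how many $t$-sets $V\ni j$ and supersets $W'\supseteq V$ of size $k$ contribute to $\sum_{V}\Gamma_{V,\gamma}(\pi_f)$.
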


The proof of Theorem~\ref{complexThm} will be deferred to Section \ref{S:decompositions}.
Section~\ref{S:applications} will demonstrate some initial applications,
including a central limit theorem with the convergence rate estimate of the Berry--Esseen type
and an Edgeworth expansion for triangle counts in random graphs.
We will also describe how Theorem~\ref{complexThm} can be used to obtain high-precision results in combinatorial enumeration, using the number of regular
graphs as an example.

\section{Some applications}\label{S:applications}
In this section,  
we illustrate the power of Theorem \ref{complexThm} with some  
straightforward applications. 
There are many interesting generalisations and  related questions along these directions, but they deserve careful consideration in subsequent papers.

Recall the definitions of $\Delta_V(f)$ and $\newDelta_V(f,\X)$  from  \eqref{def:Delta} and  \eqref{def:newDelta},  respectively. Note that they are  subadditive:
\begin{equation*} 
    \Delta_V(f+g)\leq \Delta_V(f)+\Delta_V(g), \qquad 
     \newDelta_V(f+g,\X)\leq \newDelta_V(f,\X)
     +\newDelta_V(g,\X).
\end{equation*}
Consequently, $S_k(f,\X)$ defined in \eqref{Sdef} is also subadditive.
We will use  the following bound $\Delta_V(f)$   obtained from the derivatives of~$f$.

\begin{lemma}\label{derivbnd}
Suppose $\Om=[a_1,b_1]\times\cdots\times [a_n,b_n]$ is a product of finite real
intervals, and that
$f:\Om\to\Complexes$ is analytic in~$\Om$.
Then, for all $V\subseteq [n]$,
\[
   \newDelta_V(f,\X)\leq \Delta_V(f) \le \prod_{j\in V}\, (b_j-a_j) \cdot \max_{\xvec\in\Om}
   \;\biggl| \frac{\partial^{\card V} f(\xvec)}{\prod_{j\in V} \partial x_i}
   \biggr|,
\]
where $\xvec=(x_1,\ldots,x_n)$.
\end{lemma}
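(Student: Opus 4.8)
The first inequality $\newDelta_V(f,\X)\le\Delta_V(f)$ is the observation already noted just above the statement, and I would dispose of it in one line: by the triangle inequality for expectations, $\newDelta_V(f,\X)\le\sup_{\yvec\in\Om}\E\bigl|\sum_{W\subseteq V}(-1)^{\card W}f(\X\fix_W\yvec)\bigr|\le\Delta_V(f)$, since every realisation of $\X$ lies in $\Om$. All the content is therefore in the second inequality, and the plan for it is to recognise the alternating sum in \eqref{def:Delta} as an iterated (mixed) finite difference and then apply the fundamental theorem of calculus once for each coordinate of $V$.

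Concretely, I would fix $V=\{j_1,\ldots,j_k\}\subseteq[n]$ and points $\xvec,\yvec\in\Om$. For $j\in V$ let $D_j$ be the difference operator that sends a function of the $j$-th coordinate to its value at $x_j$ minus its value at $y_j$. Expanding $\prod_{j\in V}D_j$ applied to $f$, with the coordinates outside $V$ frozen at the entries of $\xvec$, reproduces exactly $\sum_{W\subseteq V}(-1)^{\card W}f(\xvec\fix_W\yvec)$: the subset $W$ records which coordinates take the $y$-value and contributes the sign $(-1)^{\card W}$. Since distinct $D_j$ act on distinct coordinates they commute, so I can apply them one at a time in any order.

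Next I would invoke analyticity: on the box $\Om$ an analytic $f$ is $C^\infty$ in the real variables, so all mixed partials exist, are continuous, the order of differentiation is immaterial, and Fubini's theorem licenses iterating one-dimensional integrals. Each application of FTC replaces $D_j$ by $\int_{y_j}^{x_j}\partial_{t_j}(\cdot)\,dt_j$, and after $k=\card V$ steps I obtain
\[
   \sum_{W\subseteq V}(-1)^{\card W}f(\xvec\fix_W\yvec)
   =\int_{y_{j_1}}^{x_{j_1}}\!\!\cdots\int_{y_{j_k}}^{x_{j_k}}
      \frac{\partial^{\card V}f}{\prod_{j\in V}\partial x_j}\,\bigl(\,\cdot\,\bigr)\,dt_{j_1}\cdots dt_{j_k},
\]
where the integrand is the mixed partial of $f$ evaluated at the point whose $j$-th coordinate is $t_j$ for $j\in V$ and $x_j$ for $j\notin V$. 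Bounding the integrand by its maximum over $\Om$ and each interval length $\card{x_j-y_j}$ by $b_j-a_j$ then yields the stated bound on the absolute value of the alternating sum, and taking the supremum over $\xvec,\yvec\in\Om$ finishes it.

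I do not expect a genuine obstacle here; the only points that deserve a careful word are the justification of the iterated-integral representation (that analyticity supplies enough smoothness for FTC, Fubini, and the interchange of partial derivatives) and a small amount of sign/index bookkeeping in the combinatorial identity $\prod_{j\in V}D_j f=\sum_{W\subseteq V}(-1)^{\card W}f(\xvec\fix_W\yvec)$. The degenerate case $V=\emptyset$ is immediate, both sides reducing to $\max_{\xvec\in\Om}\card{f(\xvec)}$.
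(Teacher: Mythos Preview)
Your proposal is correct and follows exactly the paper's approach: the paper's proof treats the representative case $V=\{1,2\}$ by writing the alternating sum as the double integral of the mixed partial and bounding by the maximum times the interval lengths, which is precisely your iterated-FTC argument specialised to two coordinates. Your write-up simply carries this through for general $V$ with a bit more care about smoothness and Fubini, but the method is the same.
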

\begin{proof}
 The representative case $V=\{1,2\}$ follows from
 \begin{align*}
    \Delta_{\{1,2\}} & =\max_{y_1,y'_1,y_2,y_2'}\,\bigl| f(y_1,y_2,\ldots) - f(y'_1,y_2,\ldots)
    - f(y_1,y'_2,\ldots) + f(y'_1,y'_2,\ldots)\bigr| \\
    &= \max_{y_1,y'_1,y_2,y_2'}\,\biggl|\int_{[y_2,y'_2]}\int_{[y_1,y'_1]}
       \!\! \frac{\partial^2 f(x_1,x_2,\ldots)}
            {\partial x_1\partial x_2}  \,dx_1dx_2 \biggr|\\
    &\le (b_1-a_1)(b_2-a_1)\max_{x_1,x_2}
        \;\biggl| \frac{\partial^2 f(x_1,x_2,\ldots)}
            {\partial x_1\partial x_2} \biggr|. \qedhere
 \end{align*}
\end{proof}

More generally, if $f(\xvec)=g(T\xvec)$ for a linear operator $T$, then bounds on $\Delta_V(f)$ in terms of the derivatives of~$g$ are given in~\cite[Lemma~3.5]{eulerian} for real functions but are equally valid for complex functions. For certain applications where $\X$ is a Gaussian random variable,  one can first truncate $\X$ to a multidimensional cube containing most of the measure to ensure that the quantities $\Delta_V(f)$ are sufficiently small and then use \cite[Lemma 3.4]{eulerian} to show that the cumulants remain essentially the same.

\subsection{Berry--Esseen-type   bound}

The classical Berry--Esseen theorem provides a uniform bound on the difference between 
the normal  distribution function
$$
 \Phi(x) = \frac{1}{\sqrt{2\pi}} \int\limits_{-\infty}\limits^{x}	e^{-\frac{t^2}{2}} dt
$$
and the cumulative distribution function  of 
$
 \frac{1}{\sqrt{n}}\sum_{k=1}^{n} X_i,
$
where $X_i$ are i.i.d. real random variables with mean $0$, variance $1$ and $\E |X_i|^3 = \rho$. It states
\begin{equation*} 
	\sup_{x\in \Reals}\left|\Pr\left( \frac{\sum_{i=1}^{n} X_i}{ \sqrt{n}} < x\right) - \Phi(x)\right| \leq \frac{C\rho}{ n^{1/2}},
\end{equation*}
where $C$ is a universal constant. One  way  to prove the Berry-Essen theorem  is by Feller's smoothing lemma (see \cite[Section XVI.3]{Feller1971}):
\begin{equation}\label{Feller}
		\pi |F(x) - \Phi(x)| \leq \int\limits_{-T}\limits^T \bigl|\varphi(t) - e^{-\frac{t^2}{2}}\bigr| \frac{dt}{|t|} + \frac{24}{ \sqrt{2\pi} T} 
	 \ \ 	\text{ for all $x$ and $T>0$.}
\end{equation}
Here $F(x)$ and  $\varphi(t)$  are the cumulative distribution function and characteristic function of any real random variable.

Theorem \ref{complexThm}  is  a convenient tool for estimating $\varphi(t)$ for functions of independent random variables when $t$ is not too large. Thus, combining it together with \eqref{Feller}, one can derive the following Berry--Essen-type bound.

\begin{thm}\label{T:BE}
	Suppose $f:\Om\to\Reals$ is bounded and measurable with $\sigma^2 = \Var f(\X)$
     and 
     \[
     S_k(f,\X) \leq \dfrac{a}{k} e^{- k w} \qquad \text{for some $a>0$, $w\in (0,1)$ and all $k \in [n]$.}
    \]
     Then, uniformly  for all $|t| \leq  \dfrac{w^2 \sigma }{800\,a}$, 
   \[
  	 \varphi(t) := \E e^{it \frac{f(\X)-\E f(\X)}{\sigma}} = \exp\left(- \dfrac{t^2}{2} + O\left(\dfrac{n a^3 t^3}{w^3\sigma^{3}}\right)\right)  
  \]
  and
  \[
    \max_{x\in \Reals}\left | \Pr (f(\X) <x) - \Phi\Big(\dfrac{x- \E f(\X)}{\sigma}\Big) \right| = O\left(\dfrac{a}{w^2 \sigma}+ \dfrac{n a^3 }{ w^3\sigma^3 } \right).
  \]
\end{thm}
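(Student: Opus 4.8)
The plan is to combine Theorem~\ref{complexThm}, which controls $\varphi(t)$ for $t$ near the origin, with Feller's smoothing inequality~\eqref{Feller}.

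For the estimate on $\varphi$, fix $t$ with $|t|\le\frac{w^2\sigma}{800a}$ and apply Theorem~\ref{complexThm} with $m=2$ to the bounded complex function $h(\x)=\frac{it}{\sigma}\bigl(f(\x)-\E f(\X)\bigr)$, for which $\E e^{h(\X)}=\varphi(t)$. Since $\newDelta_V(f,\X)$ is unchanged when a constant is added to $f$ and is homogeneous, $S_k(h,\X)=\frac{|t|}{\sigma}S_k(f,\X)\le\frac{|t|a}{\sigma k}e^{-kw}$. Take $\alpha=\frac{8|t|a}{w\sigma}$; the assumption on $|t|$ forces $\alpha\le\frac w{100}\le\frac1{100}$, so $100\alpha\le w<1$ and $e^{3\alpha k/2-kw}\le e^{-kw/2}$. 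For $t'\in\{1,2\}$ one has $\binom{k-1}{t'-1}S_k(h,\X)\le\frac{|t|a}{\sigma}e^{-kw}$, hence $\sum_{k\ge t'}e^{3\alpha k/2}2^{t'}\binom{k-1}{t'-1}S_k(h,\X)\le\frac{2^{t'}|t|a}{\sigma}\sum_{k\ge1}e^{-kw/2}\le\frac{2^{t'}|t|a}{\sigma}\cdot\frac2w\le\alpha$, the case $t'=2$ being the binding one, so the hypothesis of Theorem~\ref{complexThm} is met. Because $\kappa_1(h)=\frac{it}{\sigma}\E\bigl(f(\X)-\E f(\X)\bigr)=0$ and $\kappa_2(h)=-\frac{t^2}{\sigma^2}\Var f(\X)=-t^2$, we obtain $\varphi(t)=(1+\delta)^n e^{-t^2/2}$ with $|\delta|\le e^{(100\alpha)^3}-1\le(e-1)(100\alpha)^3=O\bigl(a^3|t|^3/(w^3\sigma^3)\bigr)$; writing $(1+\delta)^n=\exp(n\log(1+\delta))$ and bounding $|n\log(1+\delta)|=O(n|\delta|)$ gives the first assertion.

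For the distribution function, put $Y=(f(\X)-\E f(\X))/\sigma$, so that the quantity to bound is $\sup_y|F(y)-\Phi(y)|$, where $F$ and $\varphi$ are the c.d.f.\ and characteristic function of $Y$. Apply~\eqref{Feller} with $T=\frac{w^2\sigma}{800a}$; the boundary term is $\frac{24}{\sqrt{2\pi}T}=O\bigl(\frac a{w^2\sigma}\bigr)$. Pick a threshold $t_0\le T$ with $t_0\asymp\bigl(w^3\sigma^3/(na^3)\bigr)^{1/3}$, small enough that the exponent from the previous paragraph satisfies $|n\log(1+\delta)|\le1$ whenever $|t|\le t_0$. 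On $|t|\le t_0$, $|\varphi(t)-e^{-t^2/2}|=e^{-t^2/2}\bigl|e^{n\log(1+\delta)}-1\bigr|=O\bigl(na^3|t|^3/(w^3\sigma^3)\bigr)e^{-t^2/2}$, and therefore $\int_{|t|\le t_0}|\varphi(t)-e^{-t^2/2}|\,\frac{dt}{|t|}=O\bigl(na^3/(w^3\sigma^3)\bigr)\int_{\Reals}t^2e^{-t^2/2}\,dt=O\bigl(na^3/(w^3\sigma^3)\bigr)$.

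The main obstacle is the remaining range $t_0\le|t|\le T$, where the truncation error of the cumulant series is no longer small: the previous step only gives $|\varphi(t)|\le e^{n|\delta|-t^2/2}$ with $n|\delta|$ possibly exceeding $t^2/2$, and the crude bound $|\varphi(t)-e^{-t^2/2}|\le2$ would contribute an $O(\log(T/t_0))$ term, larger than the target. To control $|\varphi(t)|$ on this range one first observes that the hypothesis itself constrains the variance: since $\newDelta_V(f,\X)$ equals the sup-norm of the $V$-th ANOVA component $f_V$ of $f$, orthogonality gives $\sigma^2=\sum_{V\ne\emptyset}\Var f_V\le\sum_{k=1}^n\frac nk S_k(f,\X)^2\le na^2\sum_{k\ge1}k^{-3}e^{-2kw}=O(na^2/w)$, so that $t^2=O(nw^3)$ throughout and the leading (sum-of-main-effects) part of $Y$ carries most of the variance. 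One then bounds $|\varphi(t)|$ on $t_0\le|t|\le T$ by an anti-concentration estimate for that sum of independent terms — giving an $e^{-ct^2}$-type factor valid for $|t|$ up to a constant multiple of $\sigma$, hence past $T$ — together with a perturbative control of the higher-order remainder; the intermediate-range contribution to~\eqref{Feller} then turns out to be $o\bigl(na^3/(w^3\sigma^3)\bigr)$, while the parameter range in which $na^3/(w^3\sigma^3)$ exceeds an absolute constant is handled by the trivial bound $|F-\Phi|\le1$. Summing the three contributions in~\eqref{Feller} yields the stated estimate.
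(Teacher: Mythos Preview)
Your verification of the hypothesis of Theorem~\ref{complexThm} and the derivation of $\varphi(t)=(1+\delta)^n e^{-t^2/2}$ are correct and essentially identical to the paper's argument. The gap is in your handling of the middle range $t_0\le|t|\le T$ in Feller's inequality.

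The proposed ANOVA/anti-concentration route does not work. The identity $\newDelta_V(f,\X)=\|f_V\|_\infty$ and the bound $\sigma^2\le\sum_k\frac{n}{k}S_k(f,\X)^2=O(na^2/w)$ are both correct, but the claim that ``the leading (sum-of-main-effects) part of $Y$ carries most of the variance'' does not follow: the hypothesis gives only \emph{upper} bounds on the $S_k$, so nothing prevents all main effects from vanishing. Take for instance $f(\X)=c\sum_{i}X_{2i-1}X_{2i}$ with i.i.d.\ Rademacher variables; here $S_1=0$, $S_2=c$, $S_k=0$ for $k\ge3$, and your anti-concentration step has nothing to act on. The subsequent ``perturbative control of the higher-order remainder'' is likewise unspecified.

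The paper's fix is far simpler and sidesteps the obstacle entirely: instead of fixing $T=\dfrac{w^2\sigma}{800a}$, take $T\asymp\min\Bigl\{\dfrac{w^2\sigma}{a},\,\dfrac{w^3\sigma^3}{na^3}\Bigr\}$. The second term in the minimum forces the exponent error $O\bigl(na^3|t|^3/(w^3\sigma^3)\bigr)$ to stay below a small multiple of $t^2$ for all $|t|\le T$, so the very estimate you already proved yields $|\varphi(t)|\le e^{-c\,t^2}$ on the entire interval; the middle-range contribution to~\eqref{Feller} is then $e^{-\Omega(T'^2)}$, negligible once one assumes (as you also do) that $na^3/(w^3\sigma^3)$ is bounded. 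The price of the smaller $T$ is a larger boundary term $\dfrac{24}{\sqrt{2\pi}\,T}$, but this is now $O\Bigl(\dfrac{a}{w^2\sigma}+\dfrac{na^3}{w^3\sigma^3}\Bigr)$, which is exactly the target bound --- so the second error term in the theorem comes from the Feller boundary, not from the integral.
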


\begin{proof}
 We apply Theorem \ref{complexThm} to the function $g(\X) :=  \dfrac{it}{\sigma}(f (\X) - \E f(\X))$. Let $\alpha = \dfrac{8at}{w \sigma}$ 
 and $|t| \leq \dfrac{w^2 \sigma}{800a}$.
Since $w\leq 1$ we get $\alpha \leq \frac{w}{100} \leq \frac{1}{100}$.
 By the assumptions, we find that 
 \[
    S_k(g,\X) \leq \dfrac{at}{k\sigma} e^{-k w}.
 \]
 Thus,   we can estimate 
 \begin{align*}
    \sum_{k=1}^m e^{\frac{3}{2}\alpha k} k S_k(g)
    &\leq \dfrac{at}{\sigma} \sum_{k=1}^m e^{3\alpha k/2 - k w}
    \leq \dfrac{at}{\sigma} 
    \sum_{k=1}^\infty e^{ - 0.98k w}
    \\
    &= \dfrac{at}{\sigma}
    \left(1-e^{-0.98w}\right)^{-1}  \leq \dfrac{2at}{w\sigma}=
    \alpha/4.
 \end{align*}
 This implies the assumptions of Theorem \ref{complexThm} with $m=2$. Applying Theorem \ref{complexThm}, we get that 
 the required bound for $\varphi(t)$.

 To estimate $\Pr(f(\X)<x)$ we can assume that $\sigma w/a  = \omega(n^{1/3})$, since the claimed bound is trivial otherwise as the LHS is always at most 1. 
  Let 
  \[ 
  T = \min\left\{\dfrac{w^2\sigma }{a}, \dfrac{w^3\sigma^3 }{a^3 n} \right\}\quad \text{and} \quad  
   T' = \min\left\{T,\dfrac{w\sigma }{a n^{1/3}}\right\}. 
   \]
   Using our estimate for $\varphi(t)$, we find that 
   \[
        \int\limits_{-T'}\limits^{T'} |\varphi(t) - e^{-\frac{t^2}{2}}|
        \frac{dt}{|t|} =
        \int\limits_{-T'}\limits^{T'} O\left(\dfrac{na^3t^3}{w^3 \sigma^3} \right) e^{-\frac{t^2}{2}}
        \frac{dt}{|t|} = O\left(\dfrac{n a^3}{w^3 \sigma^3}\right).
   \]
   The contribution of the middle range is negligible: 
   \[
    \int\limits_{T'\leq |t|\leq T} \left|\varphi(t) - e^{-\frac{t^2}{2}}\right|
        \frac{dt}{|t|} \leq 
           \int\limits_{T'\leq |t|\leq T}  2e^{-(1+o(1)) \frac{t^2}{2}}\frac{dt}{|t|} = e^{-\Omega(T'^2)} = O\left(\dfrac{a}{w^2 \sigma}+ \dfrac{n a^3 }{ w^3\sigma^3 } \right).
   \]
   Using \eqref{Feller}, we have completed the proof.
\end{proof}

For the case when $f(\X) = \sum_{i=1}^{n} X_i$ and i.i.d.\ bounded $X_i$, we get that $S_1(f,\X) = O(1)$ and $S_k(f,\X) = 0$ for $k\geq 2$. Applying Theorem \ref{T:BE}   gives the optimal rate of convergence $O(n^{-1/2})$. Similarly, one can derive optimal Berry-Esseen bounds for $U$-statistics (possibly with some missing terms) or some mixtures of those.   In particular, Theorem~\ref{T:BE}  immediately implies the asymptotic normality of small subgraph counts in the binomial random graph   $\calG(n,p)$  (for fixed $p\in (0,1)$) or similar models with independent adjacencies.  We omit the details here, as
Ruciński \cite{Rucinski1988} already established 
a full characterisation when small subgraph counts obey a central limit theorem for the general case $p=p(n)$ (which can be a decreasing function).

For many applications, stronger methods for establishing asymptotic normality are available -- such as the martingale central limit theorem or the Stein method -- which do not impose the same structural restrictions on 
$f$ as Theorems \ref{complexThm} and \ref{T:BE}. However, these approaches rarely yield optimal rates of convergence and are therefore unsuitable for obtaining sharper approximations, as noted in \cite{BGZ1997}.


 Furthermore, Theorem \ref{complexThm}   
 can give more precise estimates for  $\varphi(t)$  when $t$ is not too large. For bigger values of $t$,  a crude upper bound for $|\varphi(t)|$ is sufficient because $e^{-t^2/2}$ is already negligible. 
 Depending on the range, where one can estimate $|\varphi(t)|$, this can lead to Edgeworth-type approximations for the distribution of $f(\X)$. 
In Section \ref{S:triangles}, we illustrate this using triangle counts in   $\calG(n,p)$.

A prominent idea that helps with the large $t$ is known as \emph{decoupling}.
In particular, this idea was crucial in \cite{AM2023, Berkowitz2018, GK2016, SS2022} for establishing local limit theorems for small subgraph counts in $\calG(n,p)$.  Here is a brief description.
Suppose we can split $\X = (\Y,\Z)$, where $\Y,\Z$ are independent. Then, for any event $\calA$, 
\begin{align*}
    |\E e^{\frac{it}{\sigma}f(\Y,\Z)}|^2 &=
    \E e^{\frac{it}{\sigma}f(\Y,\Z)}  \E e^{-\frac{it}{\sigma}f(\Y,\Z')}
    =
    \E e^{\frac{it}{\sigma}(f(\Y,\Z) - f(\Y,\Z'))}  
    \\  &\leq 2\Pr(\Z \notin \calA) + \E \left(e^{\frac{it}{\sigma}(f(\Y,\Z) - f(\Y,\Z'))} \mid \Z,\Z' \in \calA\right),
\end{align*}
where $\Z'$ is an independent copy of $\Z$.
The main point is that for typical $\zvec$ and $\zvec'$ there can be a lot of cancellations in $g_{\zvec,\zvec'}(\Y)=f(\Y,\zvec) - f(\Y,\zvec')$ which, in particular, reduce the quantities  $\newDelta_V$ or even sometimes produce a sum of independent random variables after several rounds of decoupling. Then, one can achieve the desired bound for $|\varphi(t)|$ by analysing $\E e^{\frac{it}{\sigma}g_{\zvec,\zvec'}(\Y)}$.

\subsection{Edgeworth expansion for triangle counts}\label{S:triangles}

Let $T= T(n,p)$ be the number of triangles in the binomial random graph $\calG(n,p)$, where every pair appear as an edge with probability $p$ independently of each other. 
In this section, we focus on  estimating its probability mass function $\Pr(T = k)$.

For any fixed $c\in {0,1}$ and 
$n^{-1}\ll p \leq c$,  the normal  approximation holds:
\[
    \sigma \Pr(T = k)  =  \phi\left(\frac{k-\mu}{\sigma}\right) + o(1),
\]
where 
$\phi(x)=\dfrac{1}{\sqrt{2\pi}}e^{-x^2/2}$ is the normal density function,
\begin{align*}
    \mu&:= \E T = \binom{n}{3}p^3,\\
    \sigma^2 
    &:= \Var T =  12\binom{n}{4} (p^5 - p^6)
    + \binom{n}{3}p^3 (p^3 - p^6).
\end{align*}
The case of fixed $p$ is due to Gilmer and Kopparty \cite{GK2016}. R\"ollin and Ross \cite{RR2015}   showed the local limit theorem for   $n^{-1}\ll p \leq O(n^{-1/2})$. The remaining gap was recently filled by  Ara\'ujo and  Mattos~\cite{AM2023}.
Furthermore, Berkowitz \cite{Berkowitz2018}  showed that it is true for clique counts in the case of fixed $p$.
In fact, for fixed $p$,   the local limit theorem applies to all small connected subgraph counts, as established by Sah and Sawhney in \cite{SS2022}.

Using Theorem \ref{complexThm}, we can obtain more accurate approximations for   $\Pr(T =k)$ based on the cumulant series for its characteristic function. For   $r\geq 2$, let 
\[
\lambda_r := \kappa_r(T)/\sigma^{r}.
\]
For $m\geq 0$, consider the truncated  Edgeworth  series 
\begin{equation*} 
    \edge_{m}(x) =  \sum_{s=0}^{m} P_{s,\lambdavec}(-D) [\phi](x), 
\end{equation*}
where $D[\phi] = \frac{d}{dx}\phi$ and 
the polynomials $P_{s,\lambdavec}$ are defined by
\begin{equation}\label{def:poly}
    P_{s,\lambdavec}(y) = \sum_{\pi} 
    \prod_{j} \frac{1}{k_j!} \left(\frac{\lambda_{j+2}\,y^{j+2}}{(j+2)! } \right)^{k_j}.
\end{equation}
Here, the summation is over all the integer partitions $\pi$ of  $[s]$ 
and $k_j$ denotes  the number of parts of size $j$ so, in particular, we have that  $\sum_j j k_j = s$.
For example, here are the first few terms:
\begin{align*}
     \edge_{4}(x) = 
  \phi(x)
  &- \left( \frac{1}{6} \lambda_3 \, \phi^{(3)}(x)\right)   
  +\left( \frac{1}{24} \lambda_4 \, \phi^{(4)}(x) + \frac{1}{72} \lambda_3^2 \, \phi^{(6)}(x) \right) \\
& -   \left( \frac{1}{120} \lambda_5 \, \phi^{(5)}(x) + \frac{1}{144} \lambda_3 \lambda_4 \, \phi^{(7)}(x) + \frac{1}{1296} \lambda_3^3 \, \phi^{(9)}(x) \right) \\
& +   \Bigg( \frac{1}{720} \lambda_6 \, \phi^{(6)}(x) +
 \left( \frac{1}{1152} \lambda_4^2 + \frac{1}{720} \lambda_3 \lambda_5 \right) \phi^{(8)}(x) 
 \\ &\hspace{5cm}+ \frac{1}{1728} \lambda_3^2 \lambda_4 \, \phi^{(10)}(x) 
+ \frac{1}{31104} \lambda_3^4 \, \phi^{(12)}(x) \Bigg).
\end{align*}
\begin{thm}\label{T:triangles}
  For any  fixed $p\in (0,1)$ and   $m\geq 0$, we have 
\[
\sup_{k}\left|\sigma \Pr(T=k)- \calE_{m}\left(\frac{k-\mu}{\sigma}\right)\right|= O(n^{-m-1}).
\] 
\end{thm}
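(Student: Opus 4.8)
The plan is to use Fourier inversion for the lattice-valued random variable $T$, together with Theorem~\ref{complexThm} applied to $g(\X)=\tfrac{it}{\sigma}(T-\mu)$, which controls the characteristic function $\varphi(t)=\E e^{it(T-\mu)/\sigma}$ for $|t|$ up to a power of $n$, and a separate crude bound for the remaining range. Since $T$ takes integer values, $\sigma\Pr(T=k)=\tfrac{\sigma}{2\pi}\int_{-\pi}^{\pi}e^{-i\theta k}\E e^{i\theta T}\,d\theta$; substituting $\theta=t/\sigma$ and writing $e^{i\theta T}=e^{i\theta\mu}e^{it(T-\mu)/\sigma}$ turns this into $\tfrac{1}{2\pi}\int_{-\pi\sigma}^{\pi\sigma}e^{-it(k-\mu)/\sigma}\varphi(t)\,dt$. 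On the other hand, $\calE_m\bigl(\tfrac{k-\mu}{\sigma}\bigr)$ is, by the standard theory of Edgeworth expansions, exactly $\tfrac1{2\pi}\int_{\Reals}e^{-it(k-\mu)/\sigma}\,\widehat{\calE_m}(t)\,dt$ where $\widehat{\calE_m}(t)=e^{-t^2/2}\sum_{s=0}^m P_{s,\lambdavec}(it)$ is the formal truncation of $\exp\bigl(\sum_{r\ge2}\tfrac{\kappa_r(T)}{r!}(it/\sigma)^r\bigr)=\exp\bigl(-t^2/2+\sum_{r\ge3}\tfrac{\lambda_r (it)^r}{r!}\bigr)$ to total $\lambda$-degree $\le m$. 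So it suffices to bound $\int e^{-it(k-\mu)/\sigma}\bigl(\varphi(t)-\widehat{\calE_m}(t)\bigr)\,dt$ uniformly in $k$ by $O(n^{-m-1})$, which reduces to an $L^1$ bound on $\varphi(t)-\widehat{\calE_m}(t)$.

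The key inputs are the size estimates for the cumulants. For fixed $p$ one has $\sigma^2\asymp n^4$, and $\kappa_r(T)\asymp n^{r+1}$ for each fixed $r\ge1$ (this is classical; alternatively one gets $|\kappa_r(T)|=O(n^{r+1})$ directly from Lemma~\ref{derivbnd} and the cumulant bound in Theorem~\ref{realThm}, applied to the multilinear polynomial $T$ in the $\binom n2$ independent Bernoulli edge-indicators, where each $\Delta_V$ for $|V|=v$ is $O(n^{3-v})$ for $v\le 3$ and $0$ for $v>3$, so $S_k(T,\X)=O(n^{4-k})$). Hence $\lambda_r=\kappa_r(T)/\sigma^r=O(n^{r+1}/n^{2r})=O(n^{1-r})$; in particular $\lambda_3=O(n^{-2})$, $\lambda_4=O(n^{-3})$, and so on, and the $s$-th term $P_{s,\lambdavec}(it)e^{-t^2/2}$ of $\widehat{\calE_m}$ is $O(n^{-s}\,\mathrm{poly}(t)\,e^{-t^2/2})$. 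To make Theorem~\ref{complexThm} applicable, I first rescale: set $\alpha=c\,|t|/n$ for a suitable constant $c$, so that for $|t|\le n^{1/4}$ say, the hypothesis $\sum_{k=t}^n e^{3\alpha k/2}2^t\binom{k-1}{t-1}S_k(g,\X)\le\alpha\le\tfrac1{100}$ holds because $S_k(g,\X)=\tfrac{|t|}\sigma S_k(T,\X)=O(|t|\,n^{4-k}/n^2)=O(|t|\,n^{2-k})$ and the geometric factor $\binom{k-1}{t-1}e^{3\alpha k/2}$ is harmless for $k\ge t$; run the theorem with $m$ replaced by $m+1$. This gives
\[
  \varphi(t)=(1+\delta)^{\binom n2}\exp\Bigl(\sum_{r=1}^{m+1}\tfrac{\kappa_r(it(T-\mu)/\sigma)}{r!}\Bigr)
  =\exp\Bigl(-\tfrac{t^2}2+\sum_{r=3}^{m+1}\tfrac{(it)^r\lambda_r}{r!}+O\bigl(\tfrac{(it)^{m+2}\lambda_{m+2}\cdot(\text{const})}{\cdots}\bigr)+O\bigl(n^2(100\alpha)^{m+2}\bigr)\Bigr),
\]
where $\kappa_1=0$ and $\kappa_2(it(T-\mu)/\sigma)=-t^2$, and the two error terms are each $O(n^2\cdot(|t|/n)^{m+2})=O(|t|^{m+2}n^{-m})$. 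Exponentiating and comparing with $\widehat{\calE_m}(t)$ — which is the truncation of the same exponential series to $\lambda$-degree $m$ — the difference is $O\bigl(|t|^{m+?}(1+|t|^2)^{?}n^{-m-1}\bigr)e^{-t^2/2}$ after collecting the first omitted term $\lambda_{m+1}(it)^{m+1}/(m+1)!=O(|t|^{m+1}n^{-m})$... wait, that is only $n^{-m}$, so care is needed: the gain to $n^{-m-1}$ comes from the fact that on the range $|t|=O(1)$ the dominant correction has size $|\lambda_{m+1}|=O(n^{-m})$, not $n^{-m-1}$, so in fact one must push Theorem~\ref{complexThm} to $m+1$ cumulants and note that $\calE_m$ already contains all $\lambda$-monomials of degree $\le m$; the first genuinely omitted contribution to $\varphi$ beyond $\widehat{\calE_m}$ is of degree $m+1$, of size $O(n^{-m-1})$ after accounting for one extra factor $n^{-1}$ from $\lambda_{m+1}/\lambda_1$-type bookkeeping. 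Making this degree-counting precise — matching the combinatorial structure of $P_{s,\lambdavec}$ in \eqref{def:poly} with the Taylor expansion of $\exp(\sum_{r\ge3}\tfrac{(it)^r\lambda_r}{r!})$ and verifying that every term of $n$-order exactly $n^{-m}$ is captured by $\calE_m$ while the remainder is $O(n^{-m-1})$ times a polynomial in $t$ — is the crux of the small-$t$ analysis, and integrating $\mathrm{poly}(t)e^{-t^2/2}$ over $|t|\le n^{1/4}$ then yields an $O(n^{-m-1})$ contribution.

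For the remaining range $n^{1/4}\le |t|\le \pi\sigma$ one needs a crude bound $|\varphi(t)|\le e^{-\Omega(\cdots)}$ strong enough that the integral is $O(n^{-m-1})$; since $\widehat{\calE_m}(t)$ is itself $e^{-t^2/2}\,\mathrm{poly}(t)=O(e^{-n^{1/4}})$ there, only $|\varphi(t)|$ matters. For fixed $p$ this is precisely the decoupling estimate of Gilmer--Kopparty~\cite{GK2016}: writing $\X=(\Y,\Z)$ by splitting the vertex set and applying one or two rounds of the decoupling inequality sketched in Section~\ref{S:applications}, the function $g_{\zvec,\zvec'}(\Y)=T(\Y,\zvec)-T(\Y,\zvec')$ becomes, for typical $\zvec,\zvec'$, essentially a sum of independent contributions with $\newDelta_V$ small enough that Theorem~\ref{complexThm} (or a direct second-moment/anticoncentration argument) gives $|\varphi(t)|\le e^{-cn}$ for all $|t|$ in the relevant window, uniformly; this is standard and I would cite \cite{GK2016, Berkowitz2018, SS2022} for the quantitative form. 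The main obstacle is the bookkeeping in the small-$t$ regime: one must (i) verify the hypotheses of Theorem~\ref{complexThm} with the $t$-dependent scaling uniformly over $|t|\le n^{1/4}$, keeping track of the $e^{3\alpha k/2}2^t\binom{k-1}{t-1}$ weights; and (ii) show that after exponentiating the truncated cumulant expansion, the resulting function agrees with $\calE_m\bigl(\tfrac{k-\mu}{\sigma}\bigr)$ up to $L^1$-in-$t$ error $O(n^{-m-1})$ — which is an exercise in identifying the truncation $P_{s,\lambdavec}$ in \eqref{def:poly} with the degree-$s$ part of $\exp(\sum_{r\ge3}\lambda_r(it)^r/r!)$ and then controlling the tail of that series using $\lambda_r=O(n^{1-r})$ together with the $r$-dependent factorial bounds from Theorem~\ref{complexThm}, so that the Fourier inversion integral of the remainder converges absolutely with the claimed order. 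No new ideas beyond these are needed; everything else is Gaussian-integral estimates.
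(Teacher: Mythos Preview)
Your overall strategy is exactly the paper's: Fourier inversion on $[-\pi\sigma,\pi\sigma]$, Theorem~\ref{complexThm} for small $|t|$ to obtain the cumulant expansion of $\varphi(t)$, and a cited decoupling bound for the remaining range. The structure is right, but the ``wait, that is only $n^{-m}$'' moment you flag is caused by a genuine error earlier, not by a subtlety in the bookkeeping.

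The claim $\kappa_r(T)\asymp n^{r+1}$ is off by one power of $n$; it already contradicts your own $\sigma^2\asymp n^4$. The correct order is $\kappa_r(T)=O(n^{r+2})$, hence $\lambda_r=O(n^{2-r})$, so $\lambda_3=O(n^{-1})$, $\lambda_4=O(n^{-2})$, and generally $\lambda_{j+2}=O(n^{-j})$. Concretely, $\Delta_{\{e\}}(T)\le n-2$ (not $O(n^2)$), and $S_1(T),S_2(T),S_3(T)=O(n)$ while $S_k(T)=0$ for $k\ge 4$; for $g=\tfrac{it}{\sigma}(T-\mu)$ this gives $S_k(g,\X)=O(|t|/n)$, and the cumulant bound in Theorem~\ref{complexThm} with $\binom n2$ variables and $\alpha=O(|t|/n)$ then yields $|\lambda_r|=O(n^{2-r})$. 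With this correction the degree-count is clean: every monomial $\prod_j\lambda_{j+2}^{k_j}$ with $\sum_j jk_j=s$ appearing in $P_{s,\lambdavec}$ has size $O(n^{-s})$, so $\widehat{\calE_m}$ is precisely the truncation of $\exp\bigl(-t^2/2+\sum_{r\ge 3}\lambda_r(it)^r/r!\bigr)$ to $n$-order $\ge -m$, and the first omitted piece is $O(n^{-m-1})$ times a polynomial in $t$. No extra ``factor $n^{-1}$ from $\lambda_{m+1}/\lambda_1$-type bookkeeping'' is needed.

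Two smaller points where the paper is more specific. First, the paper takes the cutoff at $|t|=n^{2/3}$ and cites Berkowitz \cite[Corollary~4 and Lemma~18]{Berkowitz2018} for $|\varphi(t)|\le e^{-\Omega(n^\varepsilon)}$ on $n^{2/3}\le|t|\le\pi\sigma$; Gilmer--Kopparty~\cite{GK2016} does not directly give this quantitative bound over the full window, so you should cite Berkowitz here. Second, because $S_k(g,\X)=0$ for $k\ge 4$, the hypothesis of Theorem~\ref{complexThm} holds for \emph{every} $m$ with $\alpha=O(|t|/n)=O(n^{-1/3})$, so the entire cumulant series converges for $|t|\le n^{2/3}$; the paper uses this rather than fixing $m$ at $m{+}1$, which avoids having to tune the order of Theorem~\ref{complexThm} against the $(1+\delta)^{\binom n2}$ error.
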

\begin{proof}
We approach the distribution of $T$ using the Fourier inverse formula: for any $k$,
\begin{equation}\label{eq:start}
\Pr\left(T = k\right)=\frac{1}{\sqrt{2\pi} \sigma} \int_{-\pi \sigma}^{\pi \sigma} e^{-it \frac{k-\mu}{\sigma}} \varphi(t) dt,
\end{equation}
where 
  $
    \varphi(t) = \E e^{it \frac{T-\mu}{\sigma}}.
$
Berkowitz showed in \cite[Corollary 4 and Lemma 18]{Berkowitz2018}  that 
\begin{equation}\label{eq:Berk}
   \max_{n^{2/3}\leq |t|\leq \pi \sigma} | \varphi(t)| \leq e^{-\Omega(n^{\eps})}.
\end{equation}
Thus, we only need to estimate $\varphi(t)$ for $|t|\leq n^{2/3}$. We will apply Theorem \ref{complexThm} for 
$f(\X) = it \frac{T-\mu}{\sigma}$, where $\X \in \{0,1\}^{\binom{n}{2}}$ is the vector of edges of $\calG(n,p)$. Applying Lemma~\ref{derivbnd}, we find that, for any edge $e$, 
\[
    \Delta_{\{e\}}(f) \leq \dfrac{t}{\sigma} n.
\]
This implies that  $S_1  = O(t/n)$.
Next, for any two edges $e,e'$,  we find that 
\[
    \Delta_{\{e,e'\}}(f) \leq 
    \begin{cases}
        t/\sigma, &\text{if $e$ and $e'$ are adjacent}\\
        0, &\text{otherwise}.
    \end{cases}
\]
This implies that $S_2 = O(t/n)$.  Similarly,
observing $\Delta_{\{e,e', e''\}}(f) =0$ for any  three edges $e,e',e''$ unless they form a triangle, we derive that $S_3 = O(t/n)$. Finally, we have $S_k =0$ for all $k>3$. Then, one can easily check that assumptions of Theorem \ref{complexThm} holds with $\alpha = O(t/n) = O(n^{-1/3})$ and any $m$. Applying Theorem \ref{complexThm}, we find that the cumulant series actually converges  for   $|t|\leq n^{2/3}$:
\begin{align*}
    \varphi(t) =  \exp \left(\sum_{r=1}^\infty
    \frac{\kappa_r(f(\X))}{r!} \right) 
    =  
    \exp \left(-\frac{t^2}{2}+ \sum_{r=3}^\infty \frac{\lambda_r (it)^r}{r!}  \right),  
\end{align*}
and $\dfrac{|\lambda_r|}{r!} \leq  \dfrac{C^r}{r^2 n^{r-2}}$ for some sufficiently large constant $C$. In fact, the radius of convergence is at least $\Omega(n)$, but we will not need this observation. Now it is a routine to expand the formula above and collect the terms according to the powers of $n$. 
The terms with $\lambda_r (it)^r$ for $r\geq 3m+9$ can be bounded by $O(n^{-m-1})$ since $|t|\leq n^{2/3}$.
After doing that, we conclude  that 
\[
\varphi(t) = e^{-t^2/2} \left(\sum_{s=0}^{m} P_{s,\lambdavec}(it) + O(n^{-m-1}) Q_m(t)\right)
\]
where polynomials 
$P_{s,\lambdavec}$ are defined in \eqref{def:poly} 
and $Q_m(t)$ is some polynomial of no importance.  Using \eqref{eq:Berk},
we get that, for any $x \in \Reals$,   
\begin{align*}
\frac{1}{\sqrt{2\pi}}   &\int_{-\pi \sigma}^{\pi \sigma} e^{-it x} \varphi(t) dt 
= e^{-\Omega(n^{\eps})} + 
\frac{1}{\sqrt{2\pi}}   \int_{-n^{2/3}}^{n^{2/3}} e^{-it x} \varphi(t) dt
\\
&= e^{-\Omega(n^{\eps})} + 
\frac{1}{\sqrt{2\pi}}   \int_{-n^{2/3}}^{n^{2/3}} e^{-it x -t^2/2} \left(\sum_{s=0}^{m} P_{s,\lambdavec}(it) + O(n^{-m-1}) Q_m(t)\right)dt 
\\
&= O(n^{-m-1}) + 
\frac{1}{\sqrt{2\pi}}   \int_{-\infty}^{\infty} e^{-it x -t^2/2} \left(\sum_{s=0}^{m} P_{s,\lambdavec}(it)\right) dt 
  \\&= O(n^{-m-1})+
  \calE_{m}\left(x\right).
\end{align*}
Taking $x= \frac{k - \mu}{\sigma}$ and using \eqref{eq:start}, the result follows.
\end{proof}

In a subsequent paper, we plan to extend Theorem \ref{T:triangles} to the case when $p=o(1)$ and provide more precise local estimates in the tail of the distribution.


\subsection{Enumeration of regular graphs}

Many combinatorial counts can be expressed in terms of high-dimensional integrals,  resulting from Fourier inversion applied to a multivariable generating function. By employing the saddle-point method, one can derive approximate formulas through estimates of expressions of the form $\E e^{f(\X)}$, where $\X$ follows a gaussian distribution truncated to a box $B$. A systematic framework for this approach, together with numerous examples spanning over 30 prior papers, can be found in \cite{mother}, which is based on the use of complex martingales.  For all of these examples, Theorem \ref{complexThm}  provides much more precise formulas with very little additional work (mainly estimating quantities $\Delta_V$). 
Moreover, Theorem \ref{complexThm} significantly extends the range of applicability of analytical methods. As an illustration, we consider the problem of counting regular graphs.

For even $dn$, let $\RG(n,d)$ denote the number of labelled $d$-regular graphs on $n$-vertices.
 The main term, for the full range $1\leq d\leq n-2$ is
 \begin{equation*} 
  \RG (n,d) \sim \sqrt2\, e^{1/4}
     \bigg(\lambda^\lambda (1{-}\lambda)^{1{-}\lambda}\bigg)^{\binom n2}
      \binom{n-1}{d}^{\!n}, \qquad \text{where } \lambda = \dfrac{d}{n-1}.
\end{equation*}
The key papers for this enumeration problem are the following.
\begin{itemize}
\item For $d=o(n^{1/2})$, McKay and Wormald ~\cite{MWsparse}  used the switching method to estimate the probability that no multiple edges or loops occur in the pairing model. 
\item For $\min\{d,n-d-1\} \geq cn/\log n$ where
 $c>\frac23$, McKay and Wormald \cite{MWreg}  employed  analytical methods to estimate complex integrals.
\item 
For a large range that overlaps
both the sparse and dense regimes, 
Liebenau and Wormald  \cite{Liebenau} used contracting properties of a recurrence   relating edge probabilities and graph counts.
\end{itemize}

In the follow-up paper \cite{regular}, we obtain a more refined estimate by taking further terms of the cumulant series into account. In fact, \cite{regular}  establishes a much more general result that allows for enumerating graphs with given degrees and large forbidden structures.

\begin{thm}[Isaev, McKay \cite{regular}]\label{regularthm}
Let $\varLambda = \lambda(1-\lambda)$.
There are
polynomials $p_j(x)$ for $j\geq 1$, with $p_j$ having degree
$j$ for each~$j$, such that
\begin{equation*} 
   \RG (n,d) = \sqrt2 \,
     \Big(\lambda^\lambda (1{-}\lambda)^{1{-}\lambda}\Big)^{\binom n2}
      \binom{n-1}{d}^{\!n}
   \exp\biggl( \,
        \sum_{j=1}^m \frac{p_j(\varLambda)}{\varLambda^j n^{j-1}} 
          + O(\varLambda^{-m-1}n^{-m}) \biggr)
\end{equation*}
for any fixed~$m\geq 1$,
provided $\min\{d,n-d-1\}\geq n^\sigma$ for some fixed $\sigma>0$.
\end{thm}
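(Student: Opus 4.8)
The plan is to fit $\RG(n,d)$ into the complex-analytic enumeration framework of \cite{mother} and to replace the key integral estimate by an application of Theorem~\ref{complexThm}. Encoding a $d$-regular graph by a $0/1$-assignment to the $\binom n2$ pairs with all vertex-degrees equal to~$d$ and extracting one coefficient gives
\[
  \RG(n,d)=\frac{1}{(2\pi)^n}\int_{[-\pi,\pi]^n}
    \prod_{1\le i<j\le n}\bigl(1+\rho^2 e^{\mathrm i(\theta_i+\theta_j)}\bigr)\,
    \prod_{j=1}^n \rho^{-d}e^{-\mathrm i d\theta_j}\;d\theta ,
\]
where $\rho^2=\lambda/(1-\lambda)$ is chosen so that $\theta=\mathbf 0$ is a saddle point of the integrand. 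Because $dn$ is even, the integrand is invariant under $\theta\mapsto\theta+(\pi,\dots,\pi)$, so $\theta=(\pi,\dots,\pi)$ is a second saddle whose neighbourhood contributes identically, while everywhere on the torus outside small balls around these two points the integrand is exponentially smaller. This localises the computation to a neighbourhood of~$\mathbf 0$ and produces the factor~$\sqrt2$.

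Near~$\mathbf 0$ one writes $\log$ of the integrand as its degree-$2$ Taylor polynomial plus a remainder $f(\theta)$ collecting the terms of degree~$\ge 3$; the linear part cancels because $(n-1)\lambda=d$, and the quadratic part is the negative definite form $-\tfrac12\varLambda\,\theta\trans H_0\theta$ with $H_0=(n-2)I+J$. Integrating out the Gaussian then reduces the integral to
\[
  \RG(n,d)=\frac{2\,e^{L_0}}{(2\pi)^{n/2}\,(\varLambda^n\det H_0)^{1/2}}\;\E e^{f(\X)}\cdot\bigl(1+e^{-n^{\Omega(1)}}\bigr),
\]
where $e^{L_0}=\bigl(\lambda^\lambda(1-\lambda)^{1-\lambda}\bigr)^{-\binom n2}$ and $\X$ is a centred Gaussian with covariance $(\varLambda H_0)^{-1}\approx(\varLambda n)^{-1}I$, truncated to a box $\Om$ of half-width $b=\Theta\bigl((\varLambda n)^{-1/2}\log^{1/2}n\bigr)$ around the origin carrying all but a negligible fraction of the mass. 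By the remarks after Lemma~\ref{derivbnd} (using \cite[Lemmas~3.4 and~3.5]{eulerian}), this truncation changes neither $\E e^{f(\X)}$ nor the cumulants of $f(\X)$ by more than the error we are after, while it makes the differences $\Delta_V(f)$ small.

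Now I would apply Theorem~\ref{complexThm} to~$f$. The key structural point is that each factor of the integrand depends on only two of the variables $\theta_1,\dots,\theta_n$, so $f$ is a sum of functions of two variables; hence $\Delta_V(f)=0$ whenever $\card V\ge 3$, and $S_k(f,\X)=0$ for $k\ge 3$. For $\card V\le 2$, Lemma~\ref{derivbnd} together with the box width~$b$ and the bound $O(\varLambda)$ on the derivatives of $\log(1+\rho^2 e^{\mathrm i\psi})$ of order $\ge 3$ gives $S_1(f,\X),S_2(f,\X)=O(\varLambda n\,b^3)=n^{-\Omega(1)}$; the hypothesis $\min\{d,n-d-1\}\ge n^{\sigma}$ is precisely what keeps these small. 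So the assumption of Theorem~\ref{complexThm} holds with $\alpha=n^{-\Omega(1)}\le\tfrac1{100}$ for every $M$, and applying it with $M=M(m)$ large enough (e.g.\ $M=2m+2$) yields, exactly as in the proof of Theorem~\ref{T:triangles},
\[
  \E e^{f(\X)}=\exp\!\Bigl(\sum_{r=1}^{M}\frac{\kappa_r(f(\X))}{r!}+O\bigl(\varLambda^{-m-1}n^{-m}\bigr)\Bigr),
\]
the $O$-term absorbing both $(1+\delta)^n$ and the discarded cumulant tail. It remains to combine this with the elementary expansion of the prefactor relative to $\sqrt2\bigl(\lambda^\lambda(1-\lambda)^{1-\lambda}\bigr)^{\binom n2}\binom{n-1}{d}^{n}$ (a matter of Stirling's formula and $\det H_0=2(n-1)(n-2)^{n-1}$), to estimate each $\kappa_r(f(\X))$ by its asymptotic expansion in descending powers of $\varLambda n$ (whose leading contributions come from tree-like configurations), and to regroup everything by powers of $1/n$; the contributions at order $n^{1-j}$ take the form $p_j(\varLambda)/(\varLambda^j n^{j-1})$. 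The invariance $\RG(n,d)=\RG(n,n-d-1)$, i.e.\ the $\lambda\leftrightarrow1-\lambda$ symmetry, forces each $p_j$ to be a polynomial in $\varLambda$, and a degree count ($\sum_t(s_t-2)=2j$ over the orders $s_t\ge 3$ of the participating Taylor coefficients, halved by the symmetry) gives $\deg p_j\le j$, with nonvanishing top coefficient. Truncating at the $m$-th term gives the statement.

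The step I expect to be the main obstacle is not the cumulant estimate --- once $S_1$ and $S_2$ are controlled, Theorem~\ref{complexThm} does that almost mechanically --- but the parts that lie outside its scope: the contour analysis localising the torus integral to the two dominant saddles with an exponentially small error \emph{uniformly} over the whole range $\min\{d,n-d-1\}\ge n^{\sigma}$, and the box truncation, where one must check via \cite[Lemmas~3.4 and~3.5]{eulerian} that neither $\E e^{f(\X)}$ nor the cumulants of $f(\X)$ move. A secondary, purely computational obstacle is the bookkeeping needed to extract the $p_j$ and confirm $\deg p_j=j$; in \cite{regular} this is carried out within a more general framework that also handles prescribed degree sequences and forbidden substructures.
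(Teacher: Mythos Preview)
Your proposal has a genuine gap at the step where you invoke Theorem~\ref{complexThm}. That theorem requires the random vector $\X$ to have \emph{independent} components, but the Gaussian you define has covariance $(\varLambda H_0)^{-1}$ with $H_0=(n-2)I+J$, which is not diagonal; writing $\approx(\varLambda n)^{-1}I$ does not make the components independent. So as stated you cannot apply the theorem. If instead you diagonalise the quadratic form first --- which is exactly what the paper does, via the substitution $\phi_j=\theta_j+\frac{1-\beta}{\beta n}\sum_k\theta_k$ with $\beta=\sqrt{(n-2)/(2(n-1))}$ --- then the new components $\phi_j$ are independent, but the price is that each $\theta_j+\theta_k$ becomes $\phi_j+\phi_k-\frac{2(1-\beta)}{n}\sum_\ell\phi_\ell$, so $f$ is no longer a sum of bivariate functions. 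Your key simplification $\Delta_V(f)=0$ for $\card V\ge 3$, and hence $S_k=0$ for $k\ge 3$, is therefore false in the coordinates in which the theorem applies.

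The paper's route is to accept that after diagonalisation $f$ depends on all variables, and to show that the extra dependence, carried by the small coefficient $\frac{2(1-\beta)}{n}$, makes the higher $\Delta_V$ (or rather $\newDelta_V$) decay fast enough for the hypothesis of Theorem~\ref{complexThm} to hold with $\alpha=n^{-\Omega(1)}$; this is where \cite[Lemma~3.5]{eulerian} for compositions $f(\xvec)=g(T\xvec)$ is actually used. So the structural point ``$f$ is a sum of functions of two variables'' survives only as an input to that lemma, not as a shortcut that kills all $S_k$ with $k\ge 3$. Once you fix this --- diagonalise, then bound all the $S_k$ via the derivative estimates and the $1/n$ coupling --- the rest of your outline (localisation to the two saddles, box truncation via \cite[Lemmas~3.4--3.5]{eulerian}, cumulant expansion, regrouping into $p_j(\varLambda)/(\varLambda^j n^{j-1})$) matches the paper's sketch.
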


\begin{proof}[Proof ideas for Theorem \ref{regularthm}]
Here, we briefly sketch the main ideas and explain  
 how Theorem \ref{complexThm} enters the argument. First, observe that 
\begin{equation}\label{reg:generating}
    \RG(n,d) = [z_1^d\cdots z_n^d] \prod_{j<k} \,(1+z_jz_k),
\end{equation}
 where $[\cdot]$ denotes coefficient extraction. 
 We can find it using the Cauchy theorem.
\begin{equation}\label{RGint}
   \RG(n,d) = \frac{1}{(2\pi i)^n}
     \oint\!\cdots\!\oint \,
     \frac{\prod_{j<k} (1+z_jz_k)}
          {\prod_{j=1}^n z_j^{d+1}}\, dz_1\cdots dz_n,
\end{equation}
where the contours enclose the origin once anticlockwise.
 Taking  the circular contours $z_j = \sqrt{\dfrac{\lambda}{1-\lambda}} e^{i\theta_j}$ for all $j\in [n]$ gives
\[
    \RG(n,d) = (2\pi)^{-n} \left(\lambda^{\lambda}(1-\lambda)^{1-\lambda}\right)^{-\binom{n}{2}}   J ,
\]
where 
\begin{equation*}
J = \int_{-\pi}^\pi\!\cdots\!\int_{-\pi}^\pi
                  F(\thetavec)\,d\thetavec, \qquad 
  F(\thetavec) = \frac{\prod_{j<k}\,
                    \(1+\lambda (e^{i(\theta_j+\theta_k)}-1)\)}
               {e^{id \sum_{j=1}^n \theta_j}}. 
\end{equation*}
and $\thetavec = (\theta_1,\ldots,\theta_n)\in [-\pi,\pi]^n$, 
The integrand $F(\thetavec)$ takes its maximum absolute
value of~1 at two equivalent points 
$(0,\ldots,0)$  and  $(\pi,\ldots,\pi)$.   For a small $\eps>0$, define the region 
\begin{align*} 
   B&= \left\{\thetavec \in [-\pi,\pi]^n \st \max_{j\in [n]}|\phi_j|\leq \frac{n^{\eps}}{\sqrt{\varLambda n}}\right\},\\
    \phi_j &= \theta_j + \dfrac{1-\beta}{\beta n} \sum_{j\in [n]} \theta_j, \qquad \beta = \sqrt{\dfrac{n-2}{2(n-1)}}.
\end{align*}
One of the steps in \cite{regular} is to show that the contribution of all points outside of $B$ (and its equivalent region around $(\pi,\ldots,\pi)$) is negligible.
\[
    J = (2+e^{-\omega(\log n)}) \int_{B} F(\thetavec)d\thetavec.
\]
  For  $\thetavec \in B$, we have 
  \begin{equation*} 
    \theta_j = \phi_j - \dfrac{1-\beta}{n} \sum_{j\in[n]}\phi_j = O\left(\frac{n^{\eps}}{\sqrt{\varLambda n}}\right).
  \end{equation*}
Then, using Taylor's theorem, we can write 
\begin{align*}
    F(\thetavec) = \exp\biggl( - id \sum_{j\in [n]}\theta_j + \sum_{\ell\geq 1}\sum_{j<k} c_\ell (\theta_j+\theta_k)^\ell\biggr), 
\end{align*}
where $c_\ell$ are the coefficients of expansion 
\[
    \log \bigl(1+\lambda (e^{ix}-1)\bigr) = \sum_{\ell \geq 1}  c_{\ell} x^{\ell}
    =  i\lambda x - \dfrac{\varLambda}{2} x^2 - \dfrac{i\varLambda(1-2\lambda)}{6} x^3 + \cdots
\]
 Observe that 
 \begin{align*}
      \sum_{j<k} c_1 (\theta_j+\theta_k) &=   id \sum_{j\in [n]} \theta_j,   
      \\
       \sum_{j<k} c_2 (\theta_j+\theta_k)^2 &=  - \dfrac{(n-2)\varLambda}{2} \sum_{j\in [n]}\phi_j^2.
 \end{align*}
 Cancelling the linear term and using the quadratic term  for the gaussian vector with independent components, we get that  $\int_{B} F(\thetavec)d\thetavec$ reduces to estimating 
$
     \E e^{f(\X)},
$
where 
\begin{equation}\label{f-terms} f(\X) = \sum_{\ell\geq 3}\sum_{j<k} c_\ell \Bigl(X_j+X_k- \dfrac{2(1-\beta)}{n} \sum_{j\in[n]} X_j\Bigr)^\ell 
\end{equation}
and $\X$ is a vector with independent components, each $X_i$ with the probability density  proportional to 
$e^{-\frac{(n-2)\varLambda}{2} x^2} \,\One(x\in B)$. 
Applying Theorem \ref{T:decompositions} and taking $m$ to be sufficiently large, we find that  
\begin{equation}\label{cum-terms}
    \E e^{f(\X)} = (1+O(n^{-c})) \exp\left(\sum_{r=1}^m \frac{\kappa_r(f(\X))}{r!}\right) 
\end{equation}
for any given constant $c>0$. Also, for computations of the cumulants,   truncating the gaussian to such a large region does not make much difference, so one can use Isserlis' theorem. 
\end{proof}

In \cite[Section 5]{regular} we computed the first few terms: 
\begin{align*}
  p_1(x) &= \dfrac14 x, \\
  p_2(x) &= -\dfrac14 x^2, \\
  p_3(x) &= \dfrac{1}{24}(2-23x)x^2, \\
  p_4(x) &= \dfrac{1}{24}(22-129x)x^3, \displaybreak[1] \\
  p_5(x) &= -\dfrac{1}{12}(3-115x +483x^2)x^3, \displaybreak[1]\\
  p_6(x) &= -\dfrac{1}{60}(375-6615x+22097x^2)x^4, \\
  p_7(x) &= \dfrac{1}{720}(1046-87318x+1002900x^2-2791541x^3)x^4.
\end{align*}
Numerical experiments suggested that our new approximations work well in the full range of  degrees, so we conjecture the following. 
\begin{conj}\label{Con:bold}
Uniformly for all $1\leq d\leq n-2$ and even $dn$,   
   \[ 
   \RG(n,d) = \Big(\lambda^\lambda (1{-}\lambda)^{1{-}\lambda}\Big)^{\binom n2}
      \binom{n-1}{d}^{\!n}
   \exp\biggl( \,
        \sum_{j=1}^7 \frac{p_j(\varLambda)}{\varLambda^j n^{j-1}} 
          + O\bigg( \frac{1}{d^3 (n-d)^3 n}\bigg) \biggr).
   \] 
\end{conj}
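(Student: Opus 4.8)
We sketch a plan for attacking Conjecture~\ref{Con:bold}. Since $\RG(n,d)=\RG(n,n-1-d)$ and both the main term and the error term of the statement are invariant under $d\leftrightarrow n-1-d$, we may assume $d\le(n-1)/2$, so that $\varLambda\asymp d/n$. A preliminary observation fixes the shape of the target: because $p_j(x)$ has lowest-degree monomial $x^{\lceil(j+1)/2\rceil}$, the first omitted term $p_8(\varLambda)/(\varLambda^8n^7)$ of the series has order $\varLambda^{-3}n^{-7}\asymp 1/(d^3(n-d)^3n)$ uniformly in $d$; thus the conjecture claims no more than that truncation at $j=7$ is as accurate as the series permits. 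Fix a small $\sigma_0\in(0,\tfrac12)$ and split the range into $d\ge n^{\sigma_0}$ and $d<2n^{\sigma_0}$, which overlap on $[n^{\sigma_0},2n^{\sigma_0}]$.

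\emph{Moderate and dense degrees ($d\ge n^{\sigma_0}$).} Here Theorem~\ref{regularthm} applies with $\sigma=\sigma_0$, but its stated error $O(\varLambda^{-m-1}n^{-m})$ is, for $m=7$, too large when $\varLambda$ is small. The remedy is to invoke Theorem~\ref{regularthm} with $m=M$ a sufficiently large \emph{constant} (depending on $\sigma_0$) and then discard all but the first seven terms of the resulting $M$-term series. The discarded tail $\sum_{j=8}^{M}p_j(\varLambda)/(\varLambda^jn^{j-1})$ is a sum of a bounded number of terms, each $O(\varLambda^{-3}n^{-7})$ by the observation above, and the built-in error $O(\varLambda^{-M-1}n^{-M})$ is also $O(\varLambda^{-3}n^{-7})$ once $M\ge 2+5/\sigma_0$, since then $\varLambda^{-(M-2)}\ll n^{(1-\sigma_0)(M-2)}\le n^{M-7}$ throughout this range. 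This proves the conjecture for $d\ge n^{\sigma_0}$, for every fixed $\sigma_0>0$.

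\emph{Sparse degrees ($d<2n^{\sigma_0}$).} Now the saddle-point analysis underlying Theorem~\ref{regularthm} degenerates: after passing in \eqref{RGint} to the circular contour $z_j=\sqrt{\lambda/(1-\lambda)}\,e^{i\theta_j}$, the quadratic part of the exponent yields a reference Gaussian of variance $\asymp 1/((n-2)\varLambda)\asymp 1/d$, which is not small, while the cubic-and-higher remainder $f$ of \eqref{f-terms} has $S_1(f,\X)$ of order $n\varLambda\asymp d$, far too large to satisfy the hypotheses of Theorem~\ref{complexThm} (which force $\alpha\le 1/100$). So here I would set the analytic method aside and work with the pairing model, estimating $\RG(n,d)=\tfrac{(dn-1)!!}{(d!)^{\,n}}\,\Pr(\text{no loop or repeated pair})$ by the switching method of McKay--Wormald~\cite{MWsparse}, but carried to high enough order that this probability is controlled with relative error $O(1/(d^3n^4))$ uniformly for $1\le d<2n^{\sigma_0}$; alternatively, one could iterate the Liebenau--Wormald recurrence~\cite{Liebenau} to higher order. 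Either way one obtains a fully explicit expansion $\RG(n,d)=G(n,d)\bigl(1+O(1/(d^3n^4))\bigr)$.

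\emph{Reconciliation, and the main obstacle.} It remains to identify $G(n,d)$ with the right-hand side of Conjecture~\ref{Con:bold} up to $O(1/(d^3n^4))$. Expanding the logarithm of each expression in the sparse regime produces a \emph{finite} real-linear combination of monomials $d^{-a}n^{-b}$ that exceed the error (which forces $a\le 3$, and for $a=3$ forces $b\ge3$); if $\sigma_0$ is chosen so that the numbers $a\sigma_0+b$ arising are pairwise distinct, these monomials have pairwise distinct orders on the overlap window $d\asymp n^{\sigma_0}$. Since the moderate-degree analysis already shows $G(n,d)$ and the right-hand side of Conjecture~\ref{Con:bold} agree on that window up to $O(1/(d^3n^4))$, all the matching coefficients must coincide, and therefore the two expressions agree for all $d<2n^{\sigma_0}$ as well, completing the argument. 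The genuine difficulty in this program is the sparse step: pushing a switching analysis — or the Liebenau--Wormald recurrence — to \emph{relative} precision $O(1/(d^3n^4))$ \emph{uniformly} for all $d$ from $O(1)$ up to $n^{\sigma_0}$ is delicate, demanding sharp control of many simultaneous switching operations and careful tracking of the $d$- and $n$-dependence; and it is precisely the absence of a single method attaining this precision over the whole range $1\le d\le n-2$ — the analytic approach of Theorem~\ref{complexThm} breaking down as soon as $\varLambda n$ stops growing — that keeps the statement a conjecture rather than a theorem.
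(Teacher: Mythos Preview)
The statement is labelled a \emph{Conjecture} in the paper, and the paper gives no proof of it; it only explains why it is open. So the relevant question is whether your plan fills the gap the paper identifies, and it does not.

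The central error is in your ``moderate and dense'' paragraph, where you assert as a ``preliminary observation'' that $p_j(x)$ has lowest-degree monomial $x^{\lceil(j+1)/2\rceil}$ and then use this for $j=8,\ldots,M$ to bound the discarded tail $\sum_{j=8}^{M}p_j(\varLambda)/(\varLambda^jn^{j-1})$ by $O(\varLambda^{-3}n^{-7})$. That lowest-degree pattern is only \emph{computed} for $j\le 7$; for $j\ge 8$ Theorem~\ref{regularthm} tells you only that $p_j$ has degree~$j$, hence $p_j(\varLambda)=O(1)$ and nothing better. The paper says this explicitly: ``we expect that $p_8(\varLambda)=O(\varLambda^5)$, but without actually computing this term, one can only rely on the bound $p_8(\varLambda)=O(1)$'', and it then asks for ``an explanation for these cancellations and [to] justify that this pattern will continue for higher terms.'' Without that justification the tail bound you need collapses to $O(\varLambda^{-8}n^{-7})$, which is not $O\bigl(1/(d^3(n-d)^3n)\bigr)$ when $\varLambda$ is small, and your sentence ``This proves the conjecture for $d\ge n^{\sigma_0}$'' is unjustified. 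In other words, the unproven cancellation pattern for $p_j$, $j\ge 8$, is not a preliminary observation; it is the heart of the conjecture in this range.

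On the sparse side, your plan is consistent with the paper's own discussion, but note that the paper is more pessimistic than you about switchings (``the switching method of McKay and Wormald seems incapable of reaching the high precision required''), leaving only the Liebenau--Wormald route as a realistic candidate. A minor side remark: your estimate $S_1(f,\X)\asymp n\varLambda\asymp d$ is off; the paper gives the convergence rate of the expansion as $n^{\varepsilon}/\sqrt{d}$, and indeed $\Delta_{\{i\}}(f)$ scales like $n^{O(\varepsilon)}/\sqrt{\varLambda n}\asymp n^{O(\varepsilon)}/\sqrt{d}$, which is large for small $d$ but for the opposite reason to the one you state. This does not affect your conclusion that the analytic method fails there, only the stated mechanism.
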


The condition $\min\{d,n-d\}\geq n^{\sigma}$ in Theorem \ref{regularthm} seems to be merely a technical artefact of our proof techniques.
  Indeed, the Taylor expansion \eqref{f-terms} and the cumulant expansion \eqref{cum-terms} converge at rate $n^{\varepsilon}/\sqrt{d}$, so when $d$ is small, more terms are needed to guarantee the desired precision.  However,  for some reason, the leading terms in the cumulant expansion cancel out with the expansion of $\binom{n-1}{d}^n$.   In relation to  Conjecture~\ref{Con:bold}, we expect that $p_8(\varLambda) = O(\varLambda^5)$, but without actually computing this term, one can only rely on the bound $p_8(\varLambda) = O(1)$, which corresponds to the error term in Theorem \ref{regularthm}.
  It would be interesting to find an explanation for these cancellations and justify that this pattern will continue for higher terms. 

  The switching method of McKay and Wormald~\cite{MWsparse} seems incapable
  of reaching the high precision required in Corollary \ref{Con:bold}.
  On the other hand, the method of Liebenau and Wormald \cite{Liebenau} might apply,  provided one can derive a similar, sufficiently accurate formula for graphs with near-regular degree sequences and check that it agrees with the recurrence relation.   

It is quite typical that analytical methods for combinatorial enumeration problems work better in the dense regime (for example, due to the convergence rate issue explained above).  However, it is less known that there is another integral representation of $\RG(n,d)$ that behaves much better for sparse $d$, which we present below.  In particular, de Panafieu \cite{Panafieu} used it to get the series for $\RG(n,d)$ when  $d = O(1)$, which implies Conjecture \ref{Con:bold} for this case.   
Evnin and Horinouchi~\cite{EH2024} noted that by expanding
$\log(1+z_jz_k)$ in~\eqref{reg:generating} and applying the
Hubbard--Stratonovich transformation
\[
   \exp\biggl( \frac{(-1)^{j+1}\bigl(\sum_k z^j\bigr)^2}{2j}\biggr)
   =
   \frac{1}{\sqrt{2\pi j}}\int_{-\infty}^{\infty}
    \exp\biggl( e^{-x^2/(2j)} + \frac{i^{j+1}x\sum_k z_k^j}{j}\biggr)
    \, dx,
\]
we get the formula
\[
  \RG(n,d) = \frac{1}{\sqrt{(2\pi)^dd!}}
  \int_{-\infty}^{\infty}\!\cdots\int_{\infty}^{\infty}
    \exp\biggl( -\sum_{j=1}^d \frac{x_j^2}{2j}
        + n\log P_d(x_1,\ldots,x_d)\biggl)\,
        dx_1\cdots dx_d,
\]
where $P_d(x_1,\ldots,x_n)$ is the polynomial of total degree $d$ defined by
\[
  P_d(x_1,\ldots,x_d) = 
  [z^d] \Biggl((1+z^2)^{-1/2}
     \exp\biggl(\sum_{j=1}^d\frac{i^{j+1}x_jz^j}{j}\biggr)\Biggl).
\]
This integral offers major advantages over the $n$-dimensional integral~\eqref{RGint} for sparse $d$.
Application of Theorem~\ref{complexThm} should enable 
 extending the result of \cite{Panafieu} to the range when $d$ increases no faster than some power of~$n$.


\section{Proof of Theorem \ref{complexThm}}\label{S:decompositions}

In this section, we prove our main result. 
We estimate $\E e^{f(\X)}$ by taking conditional expectations for one variable at a time, similar to the martingale approach of \cite{mother}. Our proof relies on the particular structural property of $f$: it can be decomposed into parts such that the contribution of a part decreases with respect to the number of arguments it depends on. 
To formalise this property, we introduce a Banach algebra of complex decompositions, then later we pass from the decompositions to functions using the Hoeffding decomposition  \cite{Hoeffding1948}.

Nothing in the proof of Theorem~\ref{complexThm} prevents the complex
domain of $f$ from being replaced by a commutative unital Banach algebra.
The case of non-commutative domains such as matrix algebras remains as
an interesting open question.

\subsection{Banach algebra of decompositions}

Consider a map $\pi: 2^{[n]}\times \Om\rightarrow \Complexes$
such that, for any $V \subseteq [n]$,  the value of the function 
$\pi(V,\cdot):\Om \rightarrow \Complexes$ at the point $\x = (x_1,\ldots,x_n) \in \Om$  depends only on those 
components $x_j$ such that $j \in V$. 
Define $ f_\pi:\Om \rightarrow \Complexes$ by
\[
	f_{\pi} = \sum\limits_{V \subseteq [n]} \pi(V,\cdot).
\] 
In other words, the map $\pi$ specifies the way  
$f_\pi$ decomposes into a sum of functions indexed by subsets of $[n]$ with 
indices corresponding to redundancy of arguments. Further, we refer such a map $\pi$ as a {\it decomposition}. 
We start by introducing the algebraic structure of the space of decompositions.

For given decompositions $\pi$ and $\omega$ and constants $c_1,c_2 \in \Complexes$
the decompositions $c_1 \pi + c_2 \omega$ and $\pi \omega$ are defined by 
\[
	[c_1\pi+c_2\omega] (V,\cdot) \equiv c_1\pi(V,\cdot) + c_2 \omega(V,\cdot), 
	 \ \ \ \ \ \ 
	[\pi\omega] (V,\cdot) \equiv 
	 \sum_{V_1\cup V_2 =V} \pi(V_1,\cdot)\omega(V_2,\cdot).
\]
Note that
\begin{equation}\label{def:product}
	f_{c_1\pi+c_2\omega} = c_1 f_{\pi} + c_2 f_{\omega} \ \ \text{ and } \ \    
	f_{\pi\omega} =  f_{\pi}  f_{\omega}.
\end{equation}
It is not hard to check that these operations are well-defined and  the usual associative, commutative and distributive properties are inherited. More formally, decompositions form an associative, commutative algebra over $\Complexes$.
It has a multiplicative unit $\One$, given by
\[
	\One (V,\cdot) \equiv 
	\begin{cases}
	\,1,
	& \text{if } V =\emptyset;\\
	\,0, & \text{ otherwise.}
	\end{cases}
\]

We can also define the decomposition corresponding to an infinite series 
$\sum_{j=0}^\infty \pi_j$ by 
\[
	\left[\,\sum_{j=0}^\infty \pi_j\right] (V,\x) = \sum_{j=0}^\infty \pi_j (V,\x),
\]
provided that the right-hand side converges in $\Complexes$ for all $V \subseteq [n]$, $\x \in \Om$,  or, 
in other words, the series $\sum_{j=0}^\infty \pi_j$ converges pointwise.
For two series $\sum_{j=0}^\infty \pi_j$ and 
$\sum_{j=0}^\infty  \omega_j$ consider the Cauchy product 
\[
	\left(\sum_{j=0}^\infty \pi_j\right) \times_c
	\left(\sum_{j=0}^\infty \omega_j\right)= \sum_{j=0}^\infty \rho_j, \ \ \ \ \ \rho_j = \sum_{k=0}^{j} \pi_k \omega_{j-k}.
\]
\begin{lemma}\label{L:series}
	Suppose all three series $\sum_{j=0}^\infty \pi_j$,
	$\sum_{j=0}^\infty \omega_j$ and 
	$\left(\sum_{j=0}^\infty \pi_j\right) \times_c
	\left(\sum_{j=0}^\infty \omega_j\right)$ converge 
	pointwise to decompositions $\pi$, $\omega$, $\tau$, respectively. Then
	$
		\tau = \pi \omega. 
	$
\end{lemma}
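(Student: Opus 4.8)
The assertion is the decomposition-algebra analogue of the classical theorem of Abel on multiplication of series: if $\sum_j a_j=A$ and $\sum_j b_j=B$ are convergent scalar series and their Cauchy product $\sum_j c_j$ also converges, then $\sum_j c_j=AB$ (unlike in Mertens' theorem, no absolute convergence is assumed). The plan is to reduce to that scalar fact. Fix $V\subseteq[n]$ and $\x\in\Om$; it suffices to prove $\tau(V,\x)=[\pi\omega](V,\x)$, and by definition $[\pi\omega](V,\x)=\sum_{V_1\cup V_2=V}\pi(V_1,\x)\,\omega(V_2,\x)$ is a \emph{finite} sum of products of scalars. The only genuinely infinite objects are the series in $j$, and I would tame these by inserting a real weight $x\in[0,1)$ and letting $x\to1^-$.

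For $x\in[0,1)$ set $A_x=\sum_{j\ge0}\pi_j x^j$, $B_x=\sum_{j\ge0}\omega_j x^j$ and $C_x=\sum_{j\ge0}\rho_j x^j$. First I would check these are well-defined decompositions: since $\sum_j\pi_j$, $\sum_j\omega_j$, $\sum_j\rho_j$ converge pointwise, for each fixed $(V,\x)$ the terms $\pi_j(V,\x),\omega_j(V,\x),\rho_j(V,\x)$ tend to $0$ and hence are bounded in $j$, so the weighted series converge absolutely for $x\in[0,1)$; and a pointwise limit of functions depending only on the coordinates indexed by $V$ retains that property. The key step is the identity
\[
  A_x\,B_x = C_x \qquad\text{for every } x\in[0,1).
\]
To prove it, fix $(V,\x)$ and expand the product of decompositions: $[A_xB_x](V,\x)=\sum_{V_1\cup V_2=V}A_x(V_1,\x)\,B_x(V_2,\x)$, a finite sum over pairs of subsets. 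For each such pair the scalar series $A_x(V_1,\x)=\sum_k\pi_k(V_1,\x)x^k$ and $B_x(V_2,\x)=\sum_\ell\omega_\ell(V_2,\x)x^\ell$ converge absolutely, so their product equals the Cauchy product $\sum_j\big(\sum_{k=0}^j\pi_k(V_1,\x)\omega_{j-k}(V_2,\x)\big)x^j$. Summing over the finitely many pairs with $V_1\cup V_2=V$ and using $\rho_j(V,\x)=\sum_{k=0}^j[\pi_k\omega_{j-k}](V,\x)=\sum_{V_1\cup V_2=V}\sum_{k=0}^j\pi_k(V_1,\x)\omega_{j-k}(V_2,\x)$ yields $[A_xB_x](V,\x)=\sum_j\rho_j(V,\x)x^j=C_x(V,\x)$.

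Finally I would pass to the limit $x\to1^-$. By Abel's theorem applied (coordinatewise, to real and imaginary parts) to the convergent scalar series $\sum_j\pi_j(V,\x)$, $\sum_j\omega_j(V,\x)$, $\sum_j\rho_j(V,\x)$, we get $A_x(V,\x)\to\pi(V,\x)$, $B_x(V,\x)\to\omega(V,\x)$ and $C_x(V,\x)\to\tau(V,\x)$. Since $[A_xB_x](V,\x)=\sum_{V_1\cup V_2=V}A_x(V_1,\x)B_x(V_2,\x)$ is a fixed finite sum of products, it converges to $\sum_{V_1\cup V_2=V}\pi(V_1,\x)\omega(V_2,\x)=[\pi\omega](V,\x)$; on the other hand it equals $C_x(V,\x)\to\tau(V,\x)$. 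Hence $\tau(V,\x)=[\pi\omega](V,\x)$ for all $V$ and $\x$, that is, $\tau=\pi\omega$.

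The one point that needs care — and the reason this is not a one-line computation — is that one \emph{cannot} argue directly at $x=1$: the Cauchy product $\sum_j\sum_{k=0}^j\pi_k(V_1,\x)\omega_{j-k}(V_2,\x)$ attached to a single pair $(V_1,V_2)$ need not converge, since the hypothesis only controls the sum of these over all pairs with $V_1\cup V_2=V$. Interposing the parameter $x\in[0,1)$, where absolute convergence comes for free, is precisely what allows the three convergence hypotheses to be invoked only once, in the final limit.
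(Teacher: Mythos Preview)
Your proof is correct. Structurally it parallels the paper's argument exactly: fix $(V,\x)$, handle each pair $(V_1,V_2)$ with $V_1\cup V_2=V$ separately via a summability method compatible with Cauchy products, then pass through the finite sum over pairs and invoke regularity of the method to identify the result with $\tau(V,\x)$. The only difference is the choice of summability method. You use Abel summation (insert $x^j$, multiply absolutely convergent power series, let $x\to1^-$ and appeal to Abel's limit theorem), whereas the paper uses Ces\`aro summation (quote Ces\`aro's theorem that the Cauchy product of two convergent series is Ces\`aro-summable to the product of their sums, then use that Ces\`aro summation agrees with ordinary summation when the latter exists). Both routes are classical and of comparable length; yours is perhaps marginally more self-contained since Abel's theorem and the multiplication of absolutely convergent series are textbook staples, while the Ces\`aro multiplication theorem is slightly less commonly stated. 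Your closing remark about why one cannot work directly at $x=1$ applies verbatim to the paper's proof: the per-pair Cauchy products are only Ces\`aro-summable (respectively, Abel-summable), not convergent, and it is only after the finite sum over pairs that the hypothesis on $\sum_j\rho_j$ upgrades this to genuine convergence.
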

\begin{proof}
    We recall that a series of complex numbers $\sum_{j=1}^\infty c_k$ is called 
    Ces\`aro summable with Ces\`aro sum $C$ if 
    \[
    	\lim_{N\rightarrow \infty} \frac{1}{N} \sum_{j=1}^{N} \sum_{k=1}^{j}  c_k = C.
    \]
    It is well known that the Ces\`aro sum agrees with the sum of the series provided the series converges.  
    		We will use  Ces\`aro's theorem: if series of complex numbers $A=\sum_{j=0}^\infty a_j$ and $B=\sum_{j=0}^\infty b_j$
		are convergent then 
	   \[
			\mathit{CS}\left[\left(\sum_{j=0}^\infty a_j\right)\times_c \left(\sum_{j=0}^\infty b_j\right)\right]= AB,
		 \]
		 where $\mathit{CS}[\cdot]$ denotes the Ces\`aro sum.

	 By the assumptions of the lemma,  for any $V_1,V_2 \subseteq [n]$ and $\x \in \Om$, we find that 
	 \[
	 	\mathit{CS}\left[ \left(\sum_{j=1}^\infty \pi_j(V_1,\x)\right) \times_c  \left(\sum_{j=1}^\infty \omega_j(V_2,\x)\right) \right] = \pi(V_1,\x)\omega(V_2,\x).
	 \]
	  Due to convergence of $\left[\left(\sum_{j=0}^\infty \pi_j\right) \times_c
	\left(\sum_{j=0}^\infty \omega_j\right)\right] (V,\x)$, we obtain that
	  \begin{align*}
	  		[\pi \omega] (V,\x)  &= 
	  		\sum_{V_1 \cup V_2 = V}  CS \left[  \left(\sum_{j=1}^\infty \pi_j(V_1,\x)\right) \times_c  \left(\sum_{j=1}^\infty \omega_j(V_2,\x)\right) \right]
	  		\\
	  		&= \mathit{CS} \left[ \sum_{j=1}^\infty \left(\sum_{k=1}^j 
	  		\sum_{V_1 \cup V_2 = V}  \pi_k(V_1,\x) \omega_{j-k}(V_2,\x) \right) \right]
	  		\\&=
	  		  \mathit{CS} \left[ \sum_{j=1}^\infty \left(\sum_{k=1}^j 
	  		 [\pi_k \omega_{j-k}] (V,\x) \right) \right] = \tau (V,\x)
	  \end{align*}	
	   for all $V \subseteq[n]$, $\x \in \Om$.
\end{proof}

Let $\D$ denote the space of bounded decompositions:
\[
	\D = \{\pi \st  \pi \text{ is a decomposition and }   \max_{V\subseteq [n]}\|\pi(V,\cdot)\|_\infty< \infty \},
\]
where  $\norm{f}_\infty =\sup_{\x \in \Om} |f(\x)|$. 
For any $\gamma\geq 0$, let  
\[
	\norm{\pi}_{\gamma} = \sum_{V\subseteq [n]}  \|\pi(V,\cdot)\|_\infty \, e^{ \gamma|V|}.
\]
Since the number of summands $\pi(V,\cdot)$ is finite, we find that  $\norm{\cdot}_{\gamma}$ is a norm on $\D$ as all the norm axioms are inherited from the uniform norm.
  Moreover, $\{\D,\norm{\cdot}_{\gamma} \}$ is complete because  the space
   $\{\phi \st \phi: \Om\rightarrow \Complexes \text{ and } \norm{\phi}_\infty<\infty\}$ equipped with the norm $\norm{\cdot}_\infty$ is a Banach space. 
The norm $\norm{\cdot}_{\gamma}$ is submultiplicative. 
Indeed, for given decompositions $\pi$ and $\omega$, 
\begin{equation}\label{submult_gamma}
\begin{aligned}
	\norm{\pi \omega}_{\gamma} &= 
	\sum_{V\subseteq [n]} \,\sup_{\x \in \Om} \left|\sum_{V_1\cup V_2 =V} \pi(V_1, \x)\omega(V_2, \x)\right|  e^{ \gamma|V|} 
	\\
	&\leq 
	\sum_{V_1,V_2\subseteq [n]}  \sup_{\x \in \Om} \, \abs{\pi(V_1, \x)}
	 e^{ \gamma|V_1|} \, \sup_{\x \in \Om} \,\abs{\omega(V_2, \x)}  e^{ \gamma|V_2|} =  \norm{\pi}_\gamma \norm{\omega}_{\gamma} .
\end{aligned}
\end{equation}
Overall, we conclude  that $\{\D,\norm{\cdot}_\gamma\}$ is a unital commutative  Banach algebra.

For any integer $j\geq 0$,  the  powers of decompositions are defined by $\pi^j = \One \prod_{k=1}^j \pi$.    From \eqref{def:product}, we know that $f_{\pi^j} = (f_{\pi})^j$.
Observing  that $|f_\pi|^j = |f_{\pi^j}| \leq \norm{\pi}_\gamma^j$, we obtain the following lemma. 
\begin{lemma}\label{L:series_gamma}
Let $\pi \in \mathcal D$ and $c_0,c_1\ldots \in \Complexes$ be such  that $\sum_{j=0}^\infty |c_j| \norm{\pi}_\gamma^j < \infty$ for some $\gamma>0$. 
Then, the series $\pi^* = \sum_{j=0}^\infty c_j \pi^j$ converges in $\{\D, \|\cdot\|_\gamma\}$ and 
\[	
\norm{\pi^*}_\gamma \leq \sum_{j=0}^\infty \,\abs{c_j}\, \norm{\pi}_\gamma^j.
\]
Also, the series $\sum_{j=0}^\infty c_j f_\pi^j$ converges to $f_{\pi^*}$ uniformly.
\end{lemma}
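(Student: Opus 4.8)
The plan is to read this off from the fact, established above, that $\{\D,\norm{\cdot}_\gamma\}$ is a unital commutative Banach algebra, together with the observation that the evaluation map $E\colon\pi\mapsto f_\pi$ is a bounded (unital, algebra) homomorphism from $\{\D,\norm{\cdot}_\gamma\}$ into the Banach space of bounded functions on $\Om$ equipped with $\norm{\cdot}_\infty$.

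First I would prove convergence of $\pi^*$ and the norm bound. Submultiplicativity of $\norm{\cdot}_\gamma$, see \eqref{submult_gamma}, together with $\norm{\One}_\gamma=1$, gives by induction on $j$ that $\norm{\pi^j}_\gamma\le\norm{\pi}_\gamma^j$ for every $j\ge 0$. Hence the partial sums $S_N=\sum_{j=0}^N c_j\pi^j$ satisfy $\norm{S_N-S_M}_\gamma\le\sum_{j=M+1}^N\abs{c_j}\,\norm{\pi}_\gamma^j$ for $M<N$, and since $\sum_{j\ge 0}\abs{c_j}\norm{\pi}_\gamma^j<\infty$ by hypothesis, the tails of this series vanish, so $(S_N)$ is Cauchy. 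By completeness of $\{\D,\norm{\cdot}_\gamma\}$ it converges to some $\pi^*\in\D$, which is by definition $\sum_{j=0}^\infty c_j\pi^j$; continuity of the norm then yields $\norm{\pi^*}_\gamma=\lim_N\norm{S_N}_\gamma\le\sum_{j=0}^\infty\abs{c_j}\norm{\pi}_\gamma^j$.

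Next I would transfer this to the associated functions. Because $f_\pi=\sum_{V\subseteq[n]}\pi(V,\cdot)$ and $\gamma\ge 0$, we have $\norm{f_\pi}_\infty\le\sum_{V\subseteq[n]}\norm{\pi(V,\cdot)}_\infty e^{\gamma\abs V}=\norm{\pi}_\gamma$, so $E$ is linear with operator norm at most $1$, hence continuous. By \eqref{def:product} and induction $f_{\pi^j}=(f_\pi)^j$, and linearity of $E$ gives $E(S_N)=\sum_{j=0}^N c_j f_\pi^j$. Applying the continuous map $E$ to the convergent sequence $S_N\to\pi^*$ then gives $\sum_{j=0}^N c_j f_\pi^j=E(S_N)\to E(\pi^*)=f_{\pi^*}$ in $\norm{\cdot}_\infty$, which is precisely the claimed uniform convergence.

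I do not anticipate a genuine obstacle: this is the standard statement that an absolutely convergent series in a Banach algebra converges. The only step meriting care is the final transfer, where one must observe that $\norm{\cdot}_\infty\le\norm{\cdot}_\gamma$ on $\D$, so that convergence of decompositions in $\norm{\cdot}_\gamma$ forces uniform convergence of the associated functions. Note also that Lemma~\ref{L:series} is not needed here; it would enter only if one wished to multiply two such series via their Cauchy product.
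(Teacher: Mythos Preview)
Your argument is correct and is essentially the same as the paper's, which merely records the key observation $\abs{f_\pi}^j=\abs{f_{\pi^j}}\le\norm{\pi}_\gamma^j$ just before the lemma and leaves the standard Banach-algebra details implicit. Your write-up simply makes those details explicit, including the useful bound $\norm{f_\pi}_\infty\le\norm{\pi}_\gamma$ that carries convergence of the decompositions over to uniform convergence of the functions.
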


 We can define the exponential function of a decomposition by
\[
	e^\pi  = \sum_{j=0}^\infty \dfrac{1}{j!}\, \pi^j.
\]  
 We find from Lemma \ref{L:series_gamma} that  
$\sum_{j=0}^N \frac{1}{j!}\, \pi^j(V,\cdot) $ converges to $e^{\pi}(V,\cdot)$ uniformly   and
\begin{equation}\label{def:exp}
    f_{e^{\pi}} = e^{f_\pi}, \qquad 
\|e^\pi\|_\gamma \leq e^{\norm{\pi}_\gamma}.
\end{equation}
Using Lemma \ref{L:series} as well, we find that for $\pi,\omega \in \D$, 
\[
	e^\pi e^{\omega} = e^{\pi + \omega}.
\]


\nicebreak
\subsection{Expectation and difference operators}

Let us fix some distribution on $\Om = \varOmega_1\times\cdots\times \varOmega_n$ given by an
$\Om$-valued random vector $\X=(X_1,\ldots,X_n)$  with independent components. 
In the following, we restrict ourselves to
 {\it measurable} decompositions  $\pi$, which means, for all $V \subseteq[n]$,  that the functions $\pi(V,\X)$ are measurable with respect to the sigma field $\sigma(\X)$. 
 Note that $\pi \in \D$ implies that the random variables $\pi(V,\X)$ are bounded for all $V\subseteq[n]$, 
 which is sufficient for the existence of moments.

For each $j \in [n]$, define the expectation operator  $\E^{j}: \D \rightarrow \D$   by
\[	
	\E^{j} \pi (V,\x) = \begin{cases} 
	\,\pi(V,\x) + \E\(\pi({V\cup \{j\}}, x_1, \ldots, x_{j-1}, X_j, x_{j+1}, \ldots, x_n)\),
	& \text{if } j\notin V;\\
	\,0, & \text{ otherwise.}
	\end{cases}
\]
It satisfies the  linearity property:
\[	
	\E^{j} [c_1 \pi + c_2 \omega] = c_1 \E^{j} \pi + c_2 \E^{j} \omega
\]
for any $c_1,c_2 \in \Complexes$ and  $\pi,\omega \in \D$. 
Note also that 
\[
\E^{j} \E^{j} \pi = \E^{j} \pi  \
\text{ and }  \ \E^{j} \E^{k} \pi = \E^{k}\E^{j} \pi
\]  
for any $j,k \in [n]$.  In addition, we have 
\begin{equation}\label{expect-func}
	\E( f_\pi(\X) \mid X_1,\ldots,X_{j-1}, X_{j+1}, \ldots, X_n) = f_{\E^{j}\! \pi} (\X).
\end{equation}

The decomposition $\pi$  is called {\it $j$-restricted} if 
$\pi(V,\cdot) \equiv 0$ for any $V\subseteq[n]$ such that $j\notin V$.  Similarly, we call 
 it {\it $j$-free} if 
$\pi(V,\cdot) \equiv 0$ for any $V\subseteq[n]$ such that $j\in V$. 
Observe that 
\begin{equation}\label{split}
\begin{aligned}
\text{
for any given decomposition $\pi$ there is  unique way to split it into} \\
\text{the sum of a  $j$-restricted and a $j$-free decomposition.}
\end{aligned}
\end{equation} 
Indeed, any non-trivial $\pi(V,\cdot)$, $V \subseteq [n]$, is present exactly in one of them, corresponding to the cases
$j \in V$ and $j \notin V$, respectively.
\begin{lemma}\label{dep/indep}
  Let $\pi$, $\omega$ be decompositions from $\D$. For $j \in[n]$, the following hold. 
  \begin{itemize}\itemsep=0pt
			     
   	\item[(a)] If  $\pi$ is $j$-restricted, then $\pi\omega$ is also $j$-restricted. 
   	
   	\item[(b)] If   $\pi$ is  $j$-restricted and $\omega$ is  $j$-free,
  	 then $\norm{\pi + \omega}_\gamma = \norm{\pi}_\gamma+ \norm{\omega}_\gamma$. 
  	
  	 \item[(c)] 
  	  $\E^{j} \pi$ is a  $j$-free decomposition with $\norm{\E^{j} \pi}_\gamma \leq \norm{\pi}_\gamma$. 
In addition, if $\pi$ is  $j$-restricted, then	
$\norm{\E^{j} \pi }_\gamma \leq e^{-\gamma}\norm{\pi}_\gamma.$
	\item[(d)] If   $\pi$ is $j$-free, then	
	        $\E^{j} [\pi \omega] = \pi \E^{j} \omega$.
  \end{itemize}
\end{lemma}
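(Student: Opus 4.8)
The plan is to verify all four parts by unwinding the definitions of the product $\pi\omega$, the norm $\norm{\cdot}_\gamma$, and the operator $\E^j$; nothing beyond the elementary properties of conditional expectation is needed, and the bookkeeping in part~(d) is the only delicate point.

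For (a) I would argue that if $j\notin V$ then every pair $V_1\cup V_2=V$ satisfies $j\notin V_1$, so $\pi(V_1,\cdot)\equiv 0$ because $\pi$ is $j$-restricted; hence $[\pi\omega](V,\cdot)\equiv 0$ for every such $V$, which is exactly the assertion. For (b), observe that a $j$-restricted decomposition is supported on the sets $V$ with $j\in V$ and a $j$-free one on the sets with $j\notin V$, so these supports are disjoint; consequently $(\pi+\omega)(V,\cdot)$ equals $\pi(V,\cdot)$ when $j\in V$ and $\omega(V,\cdot)$ when $j\notin V$, and summing $\norm{(\pi+\omega)(V,\cdot)}_\infty e^{\gamma|V|}$ over all $V$ splits precisely into $\norm{\pi}_\gamma+\norm{\omega}_\gamma$.

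For (c), the definition gives $\E^j\pi(V,\cdot)\equiv 0$ when $j\in V$, so $\E^j\pi$ is $j$-free. When $j\notin V$, the triangle inequality together with the fact that expectation over $X_j$ does not increase the uniform norm yields $\norm{\E^j\pi(V,\cdot)}_\infty\le\norm{\pi(V,\cdot)}_\infty+\norm{\pi(V\cup\{j\},\cdot)}_\infty$. Multiplying by $e^{\gamma|V|}$ and summing over $V\not\ni j$: the first term contributes at most $\sum_{V\not\ni j}\norm{\pi(V,\cdot)}_\infty e^{\gamma|V|}$, while in the second term the reindexing $V'=V\cup\{j\}$ replaces $e^{\gamma|V|}$ by $e^{-\gamma}e^{\gamma|V'|}$ and produces at most $e^{-\gamma}\sum_{V'\ni j}\norm{\pi(V',\cdot)}_\infty e^{\gamma|V'|}$. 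Since $e^{-\gamma}\le 1$, the sum of the two bounds is at most $\norm{\pi}_\gamma$; and if $\pi$ is $j$-restricted the first bound vanishes, leaving the sharper estimate $e^{-\gamma}\norm{\pi}_\gamma$.

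For (d), which I expect to be the main obstacle because of the index juggling, I would split according to whether $j\in V$. If $j\in V$ both sides vanish: the left by the definition of $\E^j$, the right because $\pi$ and $\E^j\omega$ are both $j$-free, so in $\sum_{V_1\cup V_2=V}\pi(V_1,\cdot)(\E^j\omega)(V_2,\cdot)$ either $j\in V_1$ or $j\in V_2$ and the corresponding factor is $0$. If $j\notin V$, I expand $[\pi\omega](V\cup\{j\},\cdot)$: since $\pi$ is $j$-free, only the terms with $j\notin V_1$ survive, so writing the second index as $W\cup\{j\}$ with $W\subseteq V$ gives $\sum_{V_1\cup W=V}\pi(V_1,\cdot)\,\omega(W\cup\{j\},\cdot)$. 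Each surviving $\pi(V_1,\cdot)$ does not depend on the $j$th coordinate and hence passes through the expectation over $X_j$, so $\E^j[\pi\omega](V,\cdot)=[\pi\omega](V,\cdot)+\E\bigl[[\pi\omega](V\cup\{j\},\ldots,X_j,\ldots)\bigr]$ regroups as $\sum_{V_1\cup W=V}\pi(V_1,\cdot)\bigl(\omega(W,\cdot)+\E[\omega(W\cup\{j\},\ldots,X_j,\ldots)]\bigr)=\sum_{V_1\cup W=V}\pi(V_1,\cdot)(\E^j\omega)(W,\cdot)$, which is $[\pi\,\E^j\omega](V,\cdot)$ once we recall that $\E^j\omega$ is $j$-free so that all $W\subseteq V$ remain admissible. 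Keeping the three families of index sets aligned in this final regrouping is the only part that requires genuine care; everything else is routine.
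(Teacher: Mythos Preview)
Your proof is correct and follows the same approach as the paper's own proof, which simply asserts that (a), (b), (d), and the first part of (c) are ``straightforward from the definitions'' and sketches only the second part of (c) via the same reindexing $V\mapsto V\cup\{j\}$ that you use. You have supplied the details the paper omits, and your handling of part~(d) is precisely the computation underlying the paper's one-line claim.
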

\begin{proof}
	Claim (a) follows directly from the definition of the product of decompositions. 
Claims (b,d) and the first part of claim (c) are straightforward from the definitions and the independence of components of $\X$. 
	The second part of claim~(c) is  by observing
    \[
    \sup_{\x \in S} |\E^{j}\pi(V,\x)| \leq \sup_{\x \in S} |\pi(V \cup\{j\},\x)|
    \]
	for all non-trivial functions $\E^{j}\pi(V,\cdot)$  (i.e. when $j \notin V$), 
	but $|V| = |V \cup\{j\}| -1$, which gives the additional factor of $e^{-\gamma}$ in the norm $\|\cdot\|_\gamma$. 
\end{proof}

Next, for $\yvec \in \Om$,
we consider the operator $R^j_{\yvec}:\D\rightarrow D $ defined by 
\[
    R^j_{\yvec} \pi (V,\x) =   
	\begin{cases}
	\pi(V,\x) + \pi({V\cup \{j\}}, x_1, \ldots, x_{j-1}, y_j, x_{j+1}, \ldots, x_n)),
	& \text{if } j\notin V,\\
	0, & \text{ otherwise.}
	\end{cases} 
\]
Similarly to $\E^{j} \pi$ defined in the previous section, we have that 
\[
\text{$R^j_{\yvec} \pi$ is a  $j$-free decomposition and }
\|R^j_{\yvec} \pi\|_\gamma \leq \|\pi\|_{\gamma}.
\]
For  $V = \{v_1,\ldots, v_k\}\subseteq [n]$, define 
\[
R^{V}_{\yvec}= R^{v_1}_{\yvec}\cdots R_{\yvec}^{v_k}
\qquad 
\mbox{ and }
\qquad
\partial^V_{\yvec}:= \partial_{\yvec}^{v_1}\cdots \partial_{\yvec}^{v_k},
\]
where  $\partial_{\yvec}^j:= I - R^j_{\yvec}$ and  $I$ is the identity operator.  
  These definitions do not depend on the order of the elements of $V$ since the operators $ R^j_{\yvec} $  and $ R^{j'}_{\yvec} $ commute for any $j,j'\in [n]$. 
We set  $R^{\emptyset}_{\yvec} 
= \partial^{\emptyset}_{\yvec} = I$.
We will use the identity
\[
 \partial_{\yvec}^V  = \sum_{W\subseteq V} (-1)^{|W|} R_{\yvec}^W.
\]
Let
\[
	 \Gamma_{V,\gamma}(\pi)   :=  \sup_{ \y\in \Om} \left\|\partial^V_{\yvec}[\pi] \right\|_\gamma.
\]
In particular,
$\Gamma_{\emptyset,\gamma}(\pi) = \|\pi\|_\gamma.$


\subsection{Cumulants of decompositions}

For each $j\in [n]$, 
 consider the operator $\E^{\geq j}: \D\rightarrow \D$ defined by
\[
\E^{\geq j}  =   \E^{j}\E^{j+1} \ldots \E^{n}.
\]
 For  $\pi_1,\ldots,\pi_r\in \D$,  consider the conditional joint cumulant  defined by
 \[
 	\kappa^{\geq j}[\pi_1, \ldots, \pi_r] = \sum_{\tau \in \mathcal P_r} \, (\card{\tau}-1)!\, (-1)^{\card{\tau}-1}
 	\prod_{B\in\tau} \E^{\geq j}\Bigl[\prod_{k \in B} \pi_k\Bigr],
 \]
 where  $\mathcal P_r$ denotes the set of   unordered partitions $\tau$ of~$[r]$ (with non-empty blocks)  and $|\tau|$ denotes  the number of blocks in the partition $\tau$. 
 We also set
 \begin{equation}\label{cum-n+1}
 \E^{\geq n+1} \pi   = \kappa^{(\geq n+1)}\pi:= \pi \ \ \  \text{ and } \ \
\kappa^{\geq n+1}[\pi_1,\ldots,\pi_r] :=  0 
\text{ for } r \geq 2.
\end{equation}
By definition, the conditional cumulant 
$\kappa^{\geq j}$ is a symmetric and multilinear $\D$-valued function of its arguments.  
 The conditional cumulant of order $r$ is defined by
 \[
 \kappa_r^{\geq j}[\pi] =  \kappa^{\geq j}[\underbrace{\pi,\ldots, \pi}_{r \text{ times}}].
 \]
Using Lemma \ref{dep/indep}(c) and \eqref{expect-func},  we find that  decompositions
$ \E^{\geq 1} \pi$  and $ \kappa^{\geq 1}  \pi$
are $j$-free for all $j\in[n]$ and 
\begin{equation}\label{cum-functions}
    \E^{\geq 1}  \pi = \One \cdot \E f_{\pi}(\X), \qquad 
    \kappa_r^{\geq 1} [\pi] = \One  \cdot \kappa_r (f_{\pi}(\X)),
 \end{equation}
where $\kappa_r (f_{\pi}(\X))$ are the usual cumulants of the random variable $f_{\pi}(\X)$ defined according to \eqref{cum:def}. We will need the following bounds.
\begin{lemma}\label{L:cum-crucial}
Let  $m$ be a positive integer and $\alpha, \gamma \geq 0$.
 Assume that $\pi \in \D$ is such that 
    \[
	 \max_{\substack{t \in [m]\\ j \in [n]}}\,\sum_{V\in \binom{[n]}{t} \st j \in V } \!\Gamma_{V,\gamma}(\pi) \leq \alpha.
 \]
 Then the following bounds hold:
\begin{itemize}
  \item[(a)]
  for any $r\in [m]$ and  $j\in [n]$, 
 \[ \left\|\kappa_{r}^{\geq j +1 } \pi  -\kappa_{r}^{\geq j} \pi\right\|_\gamma \leq   1.1\cdot 80^{r-1} \frac{(r-1)!}{r} \alpha^r;\]
 \item[(b)] for any  $j\in [n]$,
 \[
 		\left\|\E^{\geq j}\exp\left(\sum_{r=1}^m 
 		\frac{\kappa_{r}^{\geq j +1 } [f]  -\kappa_{r}^{\geq j }[\pi]} {r!}
 		\right)-1\right\|_{\textstyle \gamma} \leq e^{ (100\alpha)^{m+1} }-1.
 		\]
 \end{itemize}
\end{lemma}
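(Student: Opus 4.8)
\textbf{Proof plan for Lemma~\ref{L:cum-crucial}.}
The plan is to estimate, for each fixed $j$, how much the conditional cumulants change when we pass from conditioning on $X_j,\ldots,X_n$ to conditioning on $X_{j+1},\ldots,X_n$, i.e.\ when we apply the single expectation operator $\E^j$. The starting point is the identity $\E^{\geq j} = \E^j \E^{\geq j+1}$, together with the decomposition in~\eqref{split} of each relevant decomposition into its $j$-restricted and $j$-free parts. For part~(a), I would first express $\kappa_r^{\geq j+1}\pi - \kappa_r^{\geq j}\pi$ as a sum over partitions $\tau\in\mathcal P_r$ of a difference of products of $\E^{\geq j+1}$- and $\E^{\geq j}$-blocks; using $\E^j$ and Lemma~\ref{dep/indep}(c,d), each block $\E^{\geq j}\bigl[\prod_{k\in B}\pi\bigr] = \E^j\E^{\geq j+1}\bigl[\prod_{k\in B}\pi\bigr]$ differs from $\E^{\geq j+1}\bigl[\prod_{k\in B}\pi\bigr]$ only through the $j$-restricted part, and applying $\E^j$ to a $j$-restricted decomposition picks up the factor $e^{-\gamma}$ in $\norm{\cdot}_\gamma$. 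The key analytic input is that the $j$-restricted part of $\E^{\geq j+1}\bigl[\prod_{k\in B}\pi\bigr]$ is controlled by the difference operators $\partial^V_{\yvec}$ for sets $V\ni j$, so its $\norm{\cdot}_\gamma$-norm is bounded in terms of the quantities $\Gamma_{V,\gamma}(\pi)$ appearing in the hypothesis, hence by $\alpha$ via the assumed bound $\sum_{V\ni j}\Gamma_{V,\gamma}(\pi)\le\alpha$ for $|V|\le m$. Summing over the at most $r^{r}$ (really, the $r$-th Bell number worth of) partitions and keeping track of the combinatorial factors $(\abs\tau-1)!$ should collapse to the stated $1.1\cdot 80^{r-1}(r-1)!\,\alpha^r/r$; the constant $80$ and the slack factor $1.1$ are there precisely to absorb the partition count and the geometric losses from multiple blocks.

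For part~(b), I would abbreviate $\Pi_j := \sum_{r=1}^m \bigl(\kappa_r^{\geq j+1}\pi - \kappa_r^{\geq j}\pi\bigr)/r!$ and note that part~(a) gives a norm bound on $\Pi_j$: summing $\norm{\kappa_r^{\geq j+1}\pi-\kappa_r^{\geq j}\pi}_\gamma/r! \le 1.1\cdot 80^{r-1}\alpha^r/(r\cdot r)$ over $r\in[m]$ yields $\norm{\Pi_j}_\gamma$ bounded by a small multiple of $\alpha^2$ (using $\alpha\le 1/100$ so the $r=2$ term dominates), comfortably less than, say, $2\alpha^2 \le (100\alpha)^{m+1}/\text{(something)}$ when $m\ge 2$; the $m=1$ case must be checked separately since then $\Pi_j = \kappa_1^{\geq j+1}\pi - \kappa_1^{\geq j}\pi$ and the bound is $O(\alpha)$, matching $(100\alpha)^2-1$. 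Then $\exp(\Pi_j)-1$ has norm at most $e^{\norm{\Pi_j}_\gamma}-1$ by~\eqref{def:exp} and submultiplicativity, and applying $\E^{\geq j}$ can only decrease the norm by Lemma~\ref{dep/indep}(c). The one subtlety is that $\E^{\geq j}$ is applied to $\exp(\Pi_j)-\One$ rather than to $\exp(\Pi_j)$ directly, but since $\E^{\geq j}\One = \One$ and $\E^{\geq j}$ is linear, $\E^{\geq j}(\exp(\Pi_j)-\One) = \E^{\geq j}\exp(\Pi_j) - \One$, so the estimate goes through and one reads off $\norm{\E^{\geq j}\exp(\Pi_j)-\One}_\gamma \le \norm{\exp(\Pi_j)-\One}_\gamma \le e^{\norm{\Pi_j}_\gamma}-1 \le e^{(100\alpha)^{m+1}}-1$, provided the numerical inequality $\norm{\Pi_j}_\gamma \le (100\alpha)^{m+1}$ is verified with the explicit constants from part~(a).

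I expect the main obstacle to be part~(a): carefully bounding the $j$-restricted component of $\E^{\geq j+1}\bigl[\prod_{k\in B}\pi\bigr]$ in terms of the difference-operator quantities $\Gamma_{V,\gamma}$, and then summing over partitions with the right bookkeeping so that the final constant really is $80$ rather than something larger. The point is that a product of $|B|$ copies of $\pi$, after applying $\E^{\geq j+1}$, has a $j$-restricted part only because at least one factor's argument set meets $\{j\}$, and expressing this via $\partial^V_{\yvec}$ requires a telescoping/Hoeffding-type identity for products — the submultiplicativity of $\norm{\cdot}_\gamma$ in~\eqref{submult_gamma} and the operator bounds $\norm{R^j_{\yvec}\pi}_\gamma\le\norm{\pi}_\gamma$, $\norm{\E^j\pi}_\gamma\le\norm{\pi}_\gamma$ are the tools, but assembling them to get the $j$-restricted norm $\le$ (sum of $\Gamma_{V,\gamma}$ over $V\ni j$) times (product of $\norm{\pi}_\gamma$ over other factors) needs care, as does controlling $\norm{\pi}_\gamma$ itself in terms of $\alpha$ (which follows by summing $\Gamma_{V,\gamma}$ over all $V$, not just those containing a fixed $j$ — but the hypothesis is stated per-$j$, so one needs $\norm{\pi}_\gamma = \Gamma_{\emptyset,\gamma}(\pi)$ handled separately or absorbed). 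Everything else is routine real-analysis estimation once this structural bound is in place.
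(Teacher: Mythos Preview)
Your plan for part~(b) has a genuine gap. You propose to bound $\norm{\Pi_j}_\gamma$ via part~(a) and then use $\norm{\E^{\geq j}(\exp(\Pi_j)-\One)}_\gamma \le e^{\norm{\Pi_j}_\gamma}-1$. But the $r=1$ term alone gives $\norm{\kappa_1^{\geq j+1}\pi-\kappa_1^{\geq j}\pi}_\gamma \le 1.1\alpha$, so $\norm{\Pi_j}_\gamma$ is of order~$\alpha$, not $\alpha^{m+1}$. The target $e^{(100\alpha)^{m+1}}-1 \approx (100\alpha)^{m+1}$ is, for small $\alpha$, \emph{much smaller} than $e^{1.1\alpha}-1\approx 1.1\alpha$; your remark that ``$O(\alpha)$ matches $(100\alpha)^2$'' is false once $\alpha\ll 10^{-4}$, and nothing in the hypotheses bounds $\alpha$ away from~$0$. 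The non-expansiveness of $\E^{\geq j}$ from Lemma~\ref{dep/indep}(c) cannot recover a missing power of~$\alpha$.

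What you are missing is that $\E^{\geq j}$ produces \emph{cancellation}, not merely non-increase. Formally one has the identity
\[
   \E^{j}\exp\biggl(\sum_{r\ge 1}\frac{\kappa_r^{\geq j+1}[\pi]-\kappa_r^{\geq j}[\pi]}{r!}\biggr)=\One,
\]
obtained by pulling the $j$-free factor $\exp\bigl(-\sum_r \kappa_r^{\geq j}[\pi]/r!\bigr)$ outside $\E^j$ and recognising what remains as $\E^j\E^{\geq j+1}e^{\pi}$ divided by $\E^{\geq j}e^{\pi}$. Truncating the inner sum at $r=m$ therefore leaves a remainder governed by terms of order $\ge m+1$, which is where the bound $(100\alpha)^{m+1}$ comes from. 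Making this quantitative is the substance of the argument in \cite[Section~6]{eulerian}, which the paper imports verbatim with $\norm{\cdot}_\gamma$ and $\Gamma_{V,\gamma}$ replacing $\norm{\cdot}_\infty$ and $\Delta_V$; part~(a) is an ingredient there but is far from sufficient on its own. Your sketch for part~(a) is broadly in the right direction, though the combinatorics that actually yields the constant~$80$ is nontrivial and not addressed in your outline.
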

The proof of Lemma \ref{L:cum-crucial} is identical to \cite[Lemma 6.6]{eulerian} and \cite[Lemma 6.7]{eulerian}. We follow the arguments of \cite[Section 6]{eulerian} applied to decompositions instead of functions. All estimates and identities  remain the same except for the minor notational changes:  we use 
norm $\|\cdot\|_\gamma$  instead of the  norm $\|\cdot\|_\infty$ and $\Gamma_{V,\gamma}$ plays the role of $\Delta_V$ in \cite[Section 6]{eulerian}.

\begin{thm}\label{T:decompositions}
	Let $m$ be a positive integer, $\alpha \in [0,\frac{1}{100}]$ and $\gamma \geq \frac32 \alpha$. 
	 Assume that $\pi \in \D$ is such that
\[
	 \max_{\substack{t \in [m]\\ j \in [n]}}\,\sum_{V\in \binom{[n]}{t} \st j \in V } \!\Gamma_{V,\gamma}(\pi) \leq \alpha.
\]
	Then, we have
   \[
  	 \E { e^{f_\pi(\X)} }  =  
   (1+\delta)^n  \exp\left(\sum_{r=1}^m \frac{\kappa_r (f_\pi(\X))}{r!} \right),
  \]
   where $\delta \in \Complexes$ satisfies $\abs\delta \leq  e^{(100 \alpha)^{m+1}}-1$. 
   Furthermore,
   for any $r\in \{2,\ldots,m\}$,
   \[
            | \kappa_r (f_\pi(\X))| 
            \leq  n \dfrac{(r-1)!}{50r}(80
             \alpha)^{r}.  
   \]
\end{thm}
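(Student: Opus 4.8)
The plan is to prove Theorem~\ref{T:decompositions} by a telescoping argument over the conditional expectation operators $\E^{\geq j}$, exactly paralleling the real-valued proof in \cite[Section~6]{eulerian} but now carried out in the Banach algebra $\{\D,\norm{\cdot}_\gamma\}$. First I would fix $\gamma \geq \frac32\alpha$ (the precise value only matters through the submultiplicativity of $\norm{\cdot}_\gamma$ and the $e^{-\gamma}$ gain in Lemma~\ref{dep/indep}(c), which is why the hypothesis asks for $\gamma\geq\frac32\alpha$ — this is what lets the $e^{3\alpha k/2}$ weights in Theorem~\ref{complexThm} absorb into $\Gamma_{V,\gamma}$). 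Set $K_j := \sum_{r=1}^m \frac{1}{r!}\kappa_r^{\geq j+1}[\pi]$ as a decomposition, and consider the products $e^{K_j}$. The key identity, valid because $\E^{\geq j} = \E^j\E^{\geq j+1}$ and because of the multilinearity/symmetry of the conditional cumulants together with $e^\pi e^\omega = e^{\pi+\omega}$, is that $\E^{\geq j}[e^\pi] = \E^{\geq 1}[e^\pi]$ reached by peeling off one variable at a time, where at each step one compares $\E^{\geq j}[\,\cdot\, e^{K_j}]$ with $\E^{\geq j+1}[\,\cdot\,e^{K_{j+1}}]$ and the discrepancy is controlled by Lemma~\ref{L:cum-crucial}(b).

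The main steps, in order, would be: (i) record that by \eqref{cum-functions} it suffices to show $\E^{\geq 1}[e^\pi] = (1+\delta)^n \cdot \One\cdot\exp(\sum_{r=1}^m \kappa_r(f_\pi(\X))/r!)$ as an identity of decompositions, since applying $f_{(\cdot)}$ and evaluating recovers the scalar statement via \eqref{def:exp}; (ii) define the telescoping quantities $A_j := \E^{\geq j}\bigl[e^\pi \cdot \exp(-K_j)\bigr]$ — or a variant thereof — noting $A_{n+1} = e^\pi\exp(-K_{n+1})$ with $K_{n+1}=0$ by \eqref{cum-n+1}, and $A_1 = \E^{\geq 1}[e^\pi]\exp(-K_1)$ using $j$-freeness and Lemma~\ref{dep/indep}(d); (iii) show $A_j = (1+\delta_j)\,A_{j+1}$ with $\abs{\delta_j}\leq e^{(100\alpha)^{m+1}}-1$ by writing the one-step difference as $\E^{\geq j}$ applied to $e^\pi$ times the exponential of $(K_{j+1}-K_j) = \sum_r \frac{1}{r!}(\kappa_r^{\geq j+1}-\kappa_r^{\geq j})[\pi]$ and invoking Lemma~\ref{L:cum-crucial}(b) directly; (iv) multiply the $n$ steps to get $A_1 = \prod_j(1+\delta_j)\, A_{n+1}$, hence $\E^{\geq 1}[e^\pi] = (1+\delta)^n\,\One\cdot\exp(\sum_r \kappa_r(f_\pi(\X))/r!)$ after rewriting $\prod(1+\delta_j)$ as $(1+\delta)^n$ for a suitable $\delta$ with the stated bound (this last repackaging is the same cosmetic move as in \cite{eulerian}); (v) for the cumulant bound, apply Lemma~\ref{L:cum-crucial}(a) and telescope: $\kappa_r(f_\pi(\X))\cdot\One = \kappa_r^{\geq 1}[\pi] = \sum_{j=1}^n (\kappa_r^{\geq j}[\pi] - \kappa_r^{\geq j+1}[\pi])$, so $\abs{\kappa_r(f_\pi(\X))}\leq n\cdot 1.1\cdot 80^{r-1}\frac{(r-1)!}{r}\alpha^r \leq n\frac{(r-1)!}{50r}(80\alpha)^r$, the last inequality because $1.1/80 < 1/50$.

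The one genuinely new ingredient compared to \cite{eulerian} — and the step I expect to be the main obstacle — is verifying that the hypothesis of Theorem~\ref{T:decompositions} (phrased via $\Gamma_{V,\gamma}(\pi) = \sup_{\y}\norm{\partial^V_{\yvec}\pi}_\gamma$) is what is actually needed for Lemma~\ref{L:cum-crucial} to go through in the complex setting, and that all the estimates of \cite[Section~6]{eulerian} survive the replacement of $\norm{\cdot}_\infty$ by $\norm{\cdot}_\gamma$ and of $\Delta_V$ by $\Gamma_{V,\gamma}$. Concretely, one must check that the telescoping differences $\kappa_r^{\geq j+1}[\pi]-\kappa_r^{\geq j}[\pi]$ only involve blocks touching the $j$-th coordinate, so that a single application of $\partial^V_{\yvec}$-type operators for $V\ni j$ appears and the bound is driven by $\max_j\sum_{V\ni j}\Gamma_{V,\gamma}$; this is precisely the role that subadditivity/submultiplicativity of $\norm{\cdot}_\gamma$ (inequality \eqref{submult_gamma}) and the factorisation $\E^{\geq j}[\pi\omega]=\pi\E^{\geq j}\omega$ for $j$-free $\pi$ (Lemma~\ref{dep/indep}(d)) play. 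Since the excerpt already asserts that ``the proof of Lemma~\ref{L:cum-crucial} is identical to \cite[Lemma~6.6]{eulerian} and \cite[Lemma~6.7]{eulerian}'' under exactly these notational substitutions, the remaining work for Theorem~\ref{T:decompositions} itself is the bookkeeping of steps (i)--(v) above, which is routine once Lemma~\ref{L:cum-crucial} is in hand; passing from Theorem~\ref{T:decompositions} to Theorem~\ref{complexThm} is then a matter of applying the Hoeffding decomposition to pass from $f$ to a canonical $\pi$ with $\Gamma_{V,\gamma}(\pi)$ controlled by $\newDelta_V(f,\X)$ and the stated weighted sums.
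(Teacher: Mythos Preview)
Your overall plan and the cumulant bound in step~(v) match the paper, but step~(iii) as you state it does not work, and this is where the condition $\gamma\ge\frac32\alpha$ is actually used. The relation ``$A_j=(1+\delta_j)A_{j+1}$'' with a \emph{scalar} $\delta_j$ is not available: $A_j$ and $A_{j+1}$ are elements of the algebra $\D$, not scalars, and for $j>1$ they are not multiples of $\One$. What the paper proves instead is an inductive bound on $\norm{\lambda_j}_\gamma$ where $\lambda_j:=\E^{\ge j}\exp\bigl(\pi-\sum_{r=1}^m\kappa_r^{\ge j}[\pi]/r!\bigr)-\One$, namely $\norm{\lambda_j}_\gamma\le \norm{\lambda_{j+1}}_\gamma\,e^{(100\alpha)^{m+1}}+\bigl(e^{(100\alpha)^{m+1}}-1\bigr)$, which iterates to $\norm{\lambda_1}_\gamma\le e^{n(100\alpha)^{m+1}}-1$.

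The induction step cannot be done ``by invoking Lemma~\ref{L:cum-crucial}(b) directly''. Writing $\Delta K_j:=\sum_r\bigl(\kappa_r^{\ge j+1}[\pi]-\kappa_r^{\ge j}[\pi]\bigr)/r!$, one has $\lambda_j+\One=\E^{\ge j}\bigl[(\One+\lambda_{j+1})\,e^{\Delta K_j}\bigr]$, but $\lambda_{j+1}$ is \emph{not} $j$-free, so you cannot pull it through $\E^{\ge j}$ and apply Lemma~\ref{L:cum-crucial}(b) to the remaining factor. The paper splits $\lambda_{j+1}=\lambda_{j+1}'+(\lambda_{j+1}-\lambda_{j+1}')$ into its $j$-free and $j$-restricted parts (cf.~\eqref{split}). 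The $j$-free part factors out via Lemma~\ref{dep/indep}(d), and Lemma~\ref{L:cum-crucial}(b) controls what is left. The $j$-restricted part, however, is multiplied by $e^{\Delta K_j}$ whose $\norm{\cdot}_\gamma$-norm may exceed~$1$; by Lemma~\ref{L:cum-crucial}(a) it is at most $\exp\bigl(1.1\alpha\sum_{r\ge1}(80\alpha)^{r-1}/r^2\bigr)\le e^{3\alpha/2}$. This is exactly compensated by the $e^{-\gamma}$ gain from the second part of Lemma~\ref{dep/indep}(c) applied to a $j$-restricted decomposition, and \emph{this} is the place where $\gamma\ge\frac32\alpha$ enters the proof of Theorem~\ref{T:decompositions} (not merely in the later passage from $\Gamma_{V,\gamma}$ to $S_k$). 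Without this split and cancellation the recursion picks up a factor strictly greater than~$1$ at each step from the $j$-restricted piece, and the induction does not close.
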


\begin{proof}

For $j \in [n{+}1]$, define the decomposition $\lambda_j$ by 
\begin{equation*}
	\lambda_j = \E^{\geq j}  \exp\left(\pi - \sum_{r=1}^m \frac{\kappa_{r}^{\geq j} [\pi]}{r!} \right)  -\One.
\end{equation*}
We prove by downwards induction that 
 $\|\lambda_j\|_\gamma \leq  e^{(n+1-j)(100 \alpha)^{m+1}}-1$.
From \eqref{cum-n+1} we  have that
\[
      \pi = \sum_{r=1}^m \frac{\kappa_{r}^{\geq n+1} [\pi]}{r!}. 
\]
Thus, $\|\lambda_{n+1}\|_\gamma =0$ so the base of the induction holds.

We proceed to the induction step.
For  $j\in [n]$,  we can write 
\begin{equation}\label{rec:lj}
 \begin{aligned}
	\lambda_{j} &= 
    \E^{\geq j} \E^{\geq j+1} \exp\left(\pi - \sum_{r=1}^m \frac{\kappa_{r}^{\geq j} [\pi]}{r!} \right)  -\One.
    \\&=
	\E^{(\geq j)} \left[ (\One+\lambda_{j+1}) \exp\left( \sum_{r=1}^m \frac{\kappa_{r}^{(\geq j+1)}(\pi) - \kappa_{r}^{(\geq j)}(\pi)}{r!} 
	\right)\right] - \One\\
 &= \E^{\geq j} \left[  (\One+\lambda_{j+1}') 
 \left(\exp\left( \sum_{r=1}^m \frac{\kappa_{r}^{\geq j+1}[\pi] - \kappa_{r}^{\geq j}[\pi]}{r!} \right)-\One\right)\right] + \E^{\geq j}  \lambda_{j+1}',\\
	&\qquad \qquad \qquad +  \E^{\geq j}  (\lambda_{j+1}- \lambda_{j+1}') \exp\left( \sum_{r=1}^m \frac{\kappa_{r}^{\geq j+1}[\pi] - \kappa_{r}^{\geq j}[\pi]}{r!} \right).
	\end{aligned}
\end{equation}
where $\lambda_{j+1}'$ is $j$-free and
$\lambda_{j+1}- \lambda_{j+1}'$ is $j$-restricted;
such a split is unique, see~\eqref{split}.
It follows from Lemma \ref{dep/indep}(c) that $\E^{\geq j} e^{\pi} $ and $\kappa_{r}^{\geq j}[\pi]$ are $k$-free decompositions for any $k \in \{j+1,\ldots,n\}$. Therefore, $\lambda_{j+1}'$ is $k$-free for any $k \in \{j,\ldots,n\}$.
Thus, using \eqref{submult_gamma}, Lemma~\ref{dep/indep}(c,d), and Lemma \ref{L:cum-crucial}(b), we find that 
\begin{align*}
	&\left\|\E^{(\geq j)}(\One+\lambda_{j+1}') \left(\exp\left(
	 \sum_{r=1}^m \frac{\kappa_{r}^{\geq j+1}[\pi] - \kappa_{r}^{\geq j}[\pi]}{r!} \right)-\One\right)\right\|_\gamma 
     \\
    &=\left\|(\One+\lambda_{j+1}') \E^{\geq j} \left[\exp\left(
	 \sum_{r=1}^m \frac{\kappa_{r}^{\geq j+1}[\pi] - \kappa_{r}^{\geq j}[\pi]}{r!} \right)-\One\right]\right\|_\gamma 
	\\ &\leq \norm{\One+\lambda_{j+1}'}_\gamma (e^{ (100\alpha)^{m+1} }-1).
\end{align*}
From Lemma \ref{dep/indep}(c),
we get
$\norm{\E^{\geq j}  \lambda_{j+1}'}_\gamma \le \norm{\lambda_{j+1}'}_\gamma$.
 Using \eqref{submult_gamma} again, Lemma \ref{dep/indep}(a,c), and Lemma \ref{L:cum-crucial}(a), we obtain that 
 \begin{align*}
 	 &\left\| \E^{\geq j} \left[(\lambda_{j+1}- \lambda_{j+1}') \exp\left( \sum_{r=1}^m \frac{\kappa_{r}^{\geq j+1}[\pi] - \kappa_{r}^{\geq j}[\pi]}{r!} \right) \right]\right\|_\gamma\\  &\leq 
 	  e^{-\gamma} \left\|(\lambda_{j+1}- \lambda_{j+1}')\exp\left( \sum_{r=1}^m \frac{\kappa_{r}^{\geq j+1}[\pi] - \kappa_{r}^{\geq j}[\pi]}{r!} \right) \right\|_\gamma 
      \\  &\leq 
 	  e^{-\gamma} \left\|(\lambda_{j+1}- \lambda_{j+1}')\right\|_\gamma
      \exp\left( \sum_{r=1}^m \frac{\|\kappa_{r}^{\geq j+1}[\pi] - \kappa_{r}^{\geq j}[\pi]\|_\gamma}{r!} \right)  
      \\
 	 &\leq \exp\biggl(-\gamma + 1.1 \alpha  \sum_{r=1}^m \frac{(80\alpha)^{r-1}}{r^2}\biggr) 
	 \norm{\lambda_{j+1}- \lambda_{j+1}'}_\gamma \\
     &\leq
 	 \norm{\lambda_{j+1}- \lambda_{j+1}'}_\gamma,
 \end{align*}
 where the last inequality relies on the assumptions $\alpha \leq \frac{1}{100}$ and $\gamma \geq \frac32 \alpha$, which ensure that
 \[
    1.1\, \alpha  \sum_{r=1}^m \frac{(80\alpha)^{r-1}}{r^2} \leq
    1.1\, \alpha  \sum_{r=1}^\infty \frac{0.8^{r-1}}{r^2}\leq 
    \tfrac32 \,\alpha \leq \gamma.
 \]
  Next, due to Lemma~\ref{dep/indep}(b), we have
 \[
 \norm{\lambda_{j+1}}_\gamma = \norm{\lambda_{j+1}'}_\gamma 
 + \norm{\lambda_{j+1}-\lambda_{j+1}'}_\gamma.
 \]
 Combining this all together,  
  we derive from \eqref{rec:lj} that
 \begin{align*}
 	\norm{\lambda_j}_\gamma &\leq \norm{\lambda_{j+1}}_{\gamma} + (1+ \norm{\lambda_{j+1}}_{\gamma}') (e^{ (100\alpha)^{m+1}}-1)
    \\ &\leq \norm{\lambda_{j+1}}_{\gamma}
    \,e^{ (100\alpha)^{m+1}} + e^{ (100\alpha)^{m+1}}-1.
 \end{align*}
Now the induction step is straightforward from the induction hypothesis, completing the proof of the claimed bound for $\|\lambda_{j}\|_\gamma$.

 Using \eqref{def:exp} and \eqref{cum-functions}, we find that 
\[
	\E^{\geq 1} e^{\pi} =  \E  e^{f_\pi(\X)}\cdot\One, \ \ \ \ \kappa_{r}^{\geq 1} [\pi] =  \kappa_r (f_\pi(\X))\cdot \One.
\]
Then, since $\|\lambda_1\|_\gamma \leq e^{n(100 \alpha)^{m+1}}-1$, we get that 
\[
\left| \E { \exp \left( f_\pi(\X)  - \sum_{r=1}^m \frac{\kappa_r (f_\pi(\X))}{r!}\right)}  -1  \right|
   = \|\lambda_1\|_\gamma \leq e^{n(100 \alpha)^{m+1}}-1.
\]
This  implies that 
\[
    \E { \exp \left( f_\pi(\X)  - \sum_{r=1}^m \frac{\kappa_r (f_\pi(\X))}{r!}\right)} = (1+\delta)^n
\]
for some $\delta \in \Complexes$ with $|\delta| \leq e^{(100 \alpha)^{m+1}}-1$, as required.

    By Lemma \ref{L:cum-crucial}(a),
we have that,
for any $r \in \{2,\ldots,m\}$ and  $j\in [n]$,
\[
\left\|\kappa_{r}^{\geq j +1 } [ \pi ]  
- \kappa_{r}^{\geq j}[  \pi ] \right\|_\gamma
\leq   1.1\cdot 80^{r-1} \dfrac{(r-1)!}{r} \alpha^r
\leq 
\dfrac{(r-1)!}{50r} (80\alpha)^r.
\]
Recalling from \eqref{cum-n+1} that $\kappa_{r}^{\geq n+1}[ \pi]=0$ and using the triangle inequality, we obtain
\[
| \kappa_{r}\left( f_\pi(\X) \right) |
 = \| \kappa_{r}^{\geq 1}[ \pi]  \|_\gamma
\le 
\sum_{j=1}^{n}\, \left\|\kappa_{r}^{\geq j +1 } [  \pi ]  
- \kappa_{r}^{\geq j}[  \pi ] \right\|_\gamma \leq  n\dfrac{(r-1)!}{50r} (80\alpha)^r.
\]
 This completes the proof.
\end{proof}


\subsection{From decompositions to functions}

Recall that  $\X=(X_1,\ldots,X_n)$ be a random vector with independent components
 taking values in $\Om =\varOmega_1\times\cdots\times \varOmega_n$.  
For a measurable bounded $f:\Om \rightarrow \Complexes$, we consider the  decomposition $\pi_f$  introduced  by  Hoeffding \cite{Hoeffding1948} defined by  
 \[
 		\pi_{f} (V,\x) = \sum_{W\subseteq V} (-1)^{|V|+|W|} \E (f(\X)\mid \X_W = \x_W),
 \]
 where vectors $\X_W$, $\x_W$ consist of the components of $\X$, $\x$ corresponding to elements of~$W$. 
 The Hoeffding decomposition  is linear: for any $c_1,c_2 \in \Complexes$, and measurable bounded functions $f,g: \Om \rightarrow \Complexes$, 
 \[
    \pi_{c_1 f  + c_2 g} = c_1 \pi_f + c_2 \pi_g.
 \]
 Observe also that 
 \begin{equation}\label{H-observ}
 \begin{aligned}
 \text{if  $f(\X)$ is independent of   $X_j$ and $j\in V$ then $\pi_{f} (V,\x) = 0$. }  
 \end{aligned}
 \end{equation}
Indeed, according to the definition above, the term 
 $\E (f(\X)\mid \X_W = \x_W)$ would cancel out $\E (f(\X)\mid \X_{W\cup j} = \x_{W\cup j})$ in this case.

In this section, we derive Theorem \ref{complexThm} by  applying  Theorem \ref{T:decompositions} to  the Hoeffding decomposition $\pi_f$.  To do this, we first need to bound the quantities
\[ 
\Gamma_{V,\gamma}(\pi_f)   =  \sup_{ \y\in \Om} \left\|\partial^V_{\yvec}[\pi_f] \right\|_\gamma.\]
Recall from \eqref{def:newDelta} that 
\[
\newDelta_V(f) = \sup_{\yvec\in\Om}\;
   \biggl| \,\E\biggl(\sum_{W\subseteq V} (-1)^{\card{W}} f(\X\fix_W\yvec)
   \biggr)\biggr|,
\]
where $\xvec\fix_V\yvec = (u_1,\ldots,u_n)$
where, $u_j=x_j$ if $j\notin V$ and $u_j=y_j$ if $j\in V$.
\begin{lemma}\label{L:partial-Delta}
    For any $\xvec,\yvec\in \Om$ and $V,W \subseteq [n]$, we have
        $|\partial_y^V \pi_f(W,x)| \leq \newDelta_{V\cup W}(f)$.
\end{lemma}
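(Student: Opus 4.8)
The plan is to unfold both operators explicitly and compare the resulting alternating sum with the definition of $\newDelta_{V\cup W}(f)$. Recall $\partial_{\yvec}^V = \sum_{U\subseteq V}(-1)^{|U|}R_{\yvec}^U$, and that applying $R_{\yvec}^U$ to a decomposition at the slot $W$ substitutes $y_j$ for the $j$-th argument for each $j\in U$ (with the convention that the term vanishes unless $U\cap W=\emptyset$, since $R_{\yvec}^j\pi(W,\cdot)$ is nonzero only when $j\notin W$, and it then involves $\pi(W\cup\{j\},\cdot)$). Chaining these, $R_{\yvec}^U\pi_f(W,\x)$ equals $\pi_f(W\cup U,\x\fix_U\yvec)$ when $U\cap W=\emptyset$ and $0$ otherwise. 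Therefore
\[
 \partial_{\yvec}^V\pi_f(W,\x) = \sum_{U\subseteq V\setminus W}(-1)^{|U|}\,\pi_f(W\cup U,\;\x\fix_U\yvec).
\]
Here one should be slightly careful: if $V\cap W\neq\emptyset$ the extra factors $R_{\yvec}^j$ with $j\in W$ act as the identity on the $W$-slot after the first substitution stabilises, so effectively only $U\subseteq V\setminus W$ contributes; I would verify this reduction by induction on $|V\cap W|$, using that $R_{\yvec}^j$ restricted to $j$-free decompositions can be tracked via Lemma~\ref{dep/indep}-style bookkeeping, or more simply by the commuting-operators identity and the observation \eqref{H-observ}.

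Next I substitute the definition of the Hoeffding decomposition, $\pi_f(W\cup U,\x) = \sum_{T\subseteq W\cup U}(-1)^{|W\cup U|+|T|}\E(f(\X)\mid \X_T=\x_T)$. Plugging in and swapping the order of summation over $U$ and $T$, the coefficient of each conditional expectation $\E(f(\X)\mid\X_T=\cdot)$ becomes an alternating sum over the $U$'s, which I expect to collapse (by the standard binomial identity $\sum_{U}(-1)^{|U|}=0$ over any nonempty index set) to leave exactly the fully-averaged alternating difference appearing in $\newDelta_{V\cup W}(f)$. Concretely, after taking $\x=\yvec$ on the free coordinates or, better, bounding $\|\cdot\|_\gamma$ termwise, each summand $\pi_f(W\cup U,\x\fix_U\yvec)$ is itself an alternating combination of values $f(\cdot\fix_{\bullet}\yvec)$, and regrouping shows $\partial_{\yvec}^V\pi_f(W,\cdot)$ is supported on slots of size $\le |V\cup W|$ with $\|\partial_{\yvec}^V\pi_f(W,\cdot)\|_\infty$ controlled by $\sup_{\yvec}\bigl|\E\sum_{Z\subseteq V\cup W}(-1)^{|Z|}f(\X\fix_Z\yvec)\bigr| = \newDelta_{V\cup W}(f)$. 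Since $\Gamma$ here is the $\gamma$-norm evaluated on a single slot, the weight $e^{\gamma|\cdot|}$ must also be accounted for — but note the lemma as stated omits it, so presumably $\Gamma_{V,\gamma}(\pi_f)$ is being compared slotwise and the $e^{\gamma|V\cup W|}$ factor is absorbed elsewhere (in the hypothesis of Theorem~\ref{complexThm}, via the $e^{3\alpha k/2}$ term); I would state the bound as $|\partial_{\yvec}^V\pi_f(W,\x)|\le\newDelta_{V\cup W}(f)$ pointwise, which is what the lemma claims.

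The main obstacle I anticipate is the bookkeeping in the first step when $V$ and $W$ overlap: one must show that the operators $R_{\yvec}^j$ for $j\in W$ do not generate spurious higher-order terms, and that the net effect is the clean formula with $U$ ranging over $V\setminus W$. This is where \eqref{H-observ} (a substituted coordinate that already lies in the slot contributes nothing new) does the real work. Once that reduction is in hand, the remaining manipulation is a mechanical application of the inclusion–exclusion identity defining $\pi_f$ and the triangle inequality, matching the result term-by-term against \eqref{def:newDelta}. I would present the representative case $W=\emptyset$ first (where $\partial_{\yvec}^V\pi_f(\emptyset,\x)=\sum_{U\subseteq V}(-1)^{|U|}\pi_f(U,\x\fix_U\yvec)$ telescopes directly into the alternating $f$-difference of $\newDelta_V(f)$) and then indicate how a general $W$ only enlarges the effective index set to $V\cup W$.
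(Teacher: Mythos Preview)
Your formula for $R_{\yvec}^U\pi_f(W,\x)$ is wrong, and this is where the argument breaks. By definition, $R_{\yvec}^j\pi(W,\x)=\pi(W,\x)+\pi(W\cup\{j\},\x\fix_j\yvec)$ when $j\notin W$; you have kept only the second summand. Iterating correctly gives
\[
   R_{\yvec}^U\pi(W,\x)=\sum_{U'\subseteq U}\pi(W\cup U',\,\x\fix_{U'}\yvec)
   \qquad (U\cap W=\emptyset),
\]
and then the alternating sum in $\partial_{\yvec}^V=\sum_{U\subseteq V}(-1)^{|U|}R_{\yvec}^U$ collapses not to your displayed sum over $U\subseteq V\setminus W$ but to the \emph{single} term
\[
   \partial_{\yvec}^V\pi(W,\x)=(-1)^{|V\setminus W|}\,\pi(V\cup W,\,\x\fix_{V\setminus W}\yvec),
\]
valid for any decomposition $\pi$ (the factors $\partial_{\yvec}^j$ with $j\in V\cap W$ act as the identity at slot $W$, since $R_{\yvec}^j\pi(W,\cdot)=0$ there). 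From this the lemma is immediate: writing $\zvec=\x\fix_{V\setminus W}\yvec$ and using independence to rewrite $\E(f(\X)\mid \X_T=\zvec_T)=\E f(\X\fix_T\zvec)$, the Hoeffding formula gives
\[
   |\pi_f(V\cup W,\zvec)|
   =\Bigl|\sum_{T\subseteq V\cup W}(-1)^{|T|}\E f(\X\fix_T\zvec)\Bigr|
   \le\newDelta_{V\cup W}(f).
\]
Your plan to substitute the Hoeffding formula into the incorrect sum and hope for a further inclusion--exclusion collapse does not repair this: already for $V=\{1\}$, $W=\emptyset$, your expression equals $\pi_f(\emptyset,\x)-\pi_f(\{1\},\x\fix_1\yvec)=2\,\E f(\X)-\E(f(\X)\mid X_1=y_1)$, which is not bounded by $\newDelta_{\{1\}}(f)$ in general.

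The paper's route is close in spirit but avoids this slot-by-slot computation: it first proves that $R_{\yvec}^j\pi_f$ is the Hoeffding decomposition of $\x\mapsto f(\x\fix_j\yvec)$, hence by linearity $\partial_{\yvec}^V\pi_f=\pi_{g^V}$ with $g^V(\x)=\sum_{U\subseteq V}(-1)^{|V|+|U|}f(\x\fix_U\yvec)$, and then bounds $|\pi_{g^V}(W,\x)|$ by $\newDelta_{V\cup W}(f)$, handling the disjoint case first and reducing the overlapping case to it. Either the paper's approach or the corrected single-term identity above works; what fails is precisely the intermediate formula you wrote for $R_{\yvec}^U$.
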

\begin{proof}
First, we observe that $R_{\yvec}^j \pi_f$ is the Hoeffding decomposition of
the function $h_j(x) = f(\xvec\fix_j\yvec)$.
Indeed, by definition, we have   
\[
   R_{\yvec}^j \pi_f (U,\xvec)
    =  
    \begin{cases}
	\pi_f(U,\xvec)+ \pi_f({U\cup \{j\}},  \xvec\fix_j\yvec ),
	& \text{if } j\notin U,\\
	0, & \text{otherwise.}
	\end{cases} 
 \]
 Using \eqref{H-observ}, we get $\pi_{g_j}(U,\xvec) =  R_{\yvec}^j \pi_f (U,\xvec) = 0$ if $j\in U$. For $j \notin U$,  we get
 \begin{align*}
    \pi_f(U,\xvec) &+ \pi_f({U\cup \{j\}},  \xvec\fix_j\yvec) 
    \\ &=\pi_f(U,\xvec) + \sum_{W'\subseteq U\cup \{j\}} (-1)^{|U|+1+|W'|} \E (f(\X)\mid \X_{W'} = (\x\fix_j \yvec)_{W'}),
    \\
    &=   
    \sum_{W'\subseteq U\cup \{j\} \st j \in W'} (-1)^{|U|+1+|W'|} \E (f(\X)\mid \X_{W'} = (\x\fix_j \yvec)_{W'})
    \\ 
    &=    \sum_{W''\subseteq V} \E (f(\X
    \fix_j \yvec)\mid \X_{W''} = x_{W''})
     = \pi_{h_j}(U,\xvec).
 \end{align*}
 By linearity, we also get  that $\partial_{\yvec}^{j} \pi$ is the Hoeffding decomposition of $f(\xvec)- f(\xvec\fix_j\yvec)$. 
 Applying this for several $j$, we find that
 \[
    \partial_{\yvec}^{V} \pi_f  = \pi_{g^V}, \qquad \text{where }\; g^V(\xvec) = \sum_{U\subseteq V} (-1)^{|V|+|U|} f(\xvec\fix_U \yvec) .
 \]
 
 Now we are ready to prove the stated bound. First, we consider the case when $V$ and $W$ are disjoint. Let $\xvec \in \Om$. 
 Note that for any $U \subseteq W$ and $U'\subseteq V$, we have 
 \[
    \E \left(f(\X \fix_{U'}\yvec) \mid \X_U =\xvec_U\right) = \E f(\X \fix_{U\cup U'} \hat{\xvec}),
 \]
 where    $\hat{\xvec} =  \xvec \fix_{V}\yvec$.
 Then, we get
 \begin{align*}
    \left|\partial_{\yvec}^V \pi_{f}(W,\xvec)\right| &=  \left| \pi_{g^V}(W,\xvec)\right|
    \\ &= \left|\sum_{U \subseteq W} (-1)^{|W|} \E \left(g^V(\X)   \mid \X_U =\xvec_U\right)\right|
    \\&= 
    \left|\sum_{U \subseteq W} 
    \sum_{U' \subseteq V}
    (-1)^{|U|+|U'|} \E \left(f(\X \fix_{U'}\yvec) \mid \X_U =\xvec_U\right)\right|\\
    & = \left|\E \left(\sum_{U'' \subseteq W\cup V} (-1)^{|U|}  f(\X \fix_{U''} \hat{\xvec})\right)\right| \leq \newDelta_{W\cup V}(f).
 \end{align*}
 
 Finally, for the case when $V\cap W \neq \emptyset$, we observe that  
 \[
    \partial^j_{\yvec} \pi_{g}(W,\xvec) = \pi_g (W,\xvec) \qquad \text{for all $j\in W$ and $\xvec\in \Om$.} 
 \]
It follows that 
\begin{align*}
    \partial^{V}_{\yvec} \pi_f (W,\xvec)
    &= \left[\partial^{V\cap W}_{\yvec}  \partial^{V\setminus W}_{\yvec} \pi_f\right](W,\xvec)  = 
     \partial^{V\cap W}_{\yvec} \pi_{g^{V\setminus W}}(W,\xvec) 
     \\ &= \pi_{g^{V\setminus W}}(W,\xvec) = 
     \partial^{V\setminus W}_{\yvec} \pi_f(W,\xvec),
\end{align*}
which reduces the problem to the case when $V$ and $W$ are disjoint.
\end{proof}

Now we can complete the proof of Theorem~\ref{complexThm}.
From Lemma \ref{L:partial-Delta} we derive that 
\[
    \Gamma_{V,\gamma}(\pi_f) \leq \sum_{W \subseteq [n]}  \newDelta_{W\cup V}(f) e^{\gamma |W|} \leq \sum_{W'\in [n]\st V\subseteq W' }  \newDelta_{W'}(f) e^{\gamma |W'|} 2^{|V|}.
\]
Then, recalling from \eqref{Sdef} that 
\begin{equation*}
   S_v = \max_{j \in [n]}\,\sum_{V\in \binom{[n]}{v} \st j \in V } \!\newDelta_V(f), 
\end{equation*}
we find that, for any $t\in [m]$ and $j\in [n]$ 
\[
    \sum_{V\in \binom{[n]}{t} \st j \in V } \!\Gamma_{V,\gamma}(\pi_f)
    \leq 
    \sum_{V\in \binom{[n]}{t} \st j \in V } 
     \sum_{W'\in [n]\st V\subseteq W' }  \newDelta_{W'}(f) e^{\gamma |W'|} 2^{t}
     \leq \sum_{k = t}^n e^{\gamma k} 2^t \binom{k-1}{t-1} S_k(f).
\]
Thus, taking $\gamma = 3\alpha/2$, we have verified that the assumptions of Theorem \ref{T:decompositions} follow from the assumptions of Theorem \ref{complexThm}. Applying  Theorem \ref{T:decompositions}, we get the claimed bounds.

\section{Acknowledgments} 
This work is supported   by the Australian Research Council grant DP250101611.

The author would like to thank Brendan McKay for valuable discussions and feedback on all aspects of this work, and for his patience and understanding, as the extended preparation of this manuscript caused the joint work \cite{regular} to be submitted several years after it was written and presented at multiple conferences and workshops. 

The author thanks Jane Gao for drawing attention to the local limit theorem for triangle counts, which provides a natural application of the main result of this paper, and Oleg Evnin for emphasising the role of the Hubbard–Stratonovich transformation in counting regular graphs.

\end{document}